\documentclass[preprint,11pt]{elsarticle}

\usepackage{amsmath,amssymb,amsthm,mathtools}
\usepackage{microtype}
\usepackage{enumitem}
\usepackage[hidelinks]{hyperref}
\setlist[enumerate]{leftmargin=*,itemsep=2pt,topsep=4pt}

\newtheorem{theorem}{Theorem}[section]
\newtheorem{lemma}[theorem]{Lemma}
\newtheorem{proposition}[theorem]{Proposition}

\theoremstyle{remark}
\newtheorem{remark}[theorem]{Remark}

\DeclareMathOperator{\supp}{supp}
\DeclareMathOperator{\wt}{wt}
\newcommand{\calH}{\mathcal H}
\newcommand{\calC}{\mathcal C}

\journal{Journal of Combinatorial Theory, Series A}
\begin{document}
\begin{frontmatter}

\title{Asymptotically Sharp Length Thresholds for \texorpdfstring{$q$}{q}-ary Codes with Distances \texorpdfstring{$2$}{2} and \texorpdfstring{$4$}{4}}
\author[ncu]{Valjakas Djaljapayan\corref{cor1}}
\ead{115281004@cc.ncu.edu.tw}

\author[ntnu]{Yen-Chi Roger Lin}
\ead{yclin@math.ntnu.edu.tw}

\author[ncu]{Wei-Hsuan Yu}
\ead{whyu@math.ncu.edu.tw}

\cortext[cor1]{Corresponding author}

\affiliation[ncu]{organization={Department of Mathematics, National Central University},
                  country={Taiwan}}
\affiliation[ntnu]{organization={Department of Mathematics, National Taiwan Normal University},
                   country={Taiwan}}

\begin{abstract}
For an integer $q\ge5$, let $A_q(n,\{2,4\})$ be the largest cardinality of a subset
of a $q$-ary Hamming space of length $n$ in which every two distinct words have
distance $2$ or $4$.  Set
\[
N(q)=\left\lceil\max\left\{2q+1,
 \frac{17+\sqrt{8q^2+96q-95}}2\right\}\right\rceil.
\]
We prove that $A_q(n,\{2,4\})=1+\binom n2$ for every $n\ge N(q)$.
In particular, $N(q)=2q+1$ for $q\ge26$.  An explicit code of size
$q(n-2)$ shows that the least eventual threshold $N_*(q)$ satisfies
$N_*(q)\ge2q-1$; consequently
$2q-1\le N_*(q)\le2q+1$ for every $q\ge26$.
The argument applies to arbitrary integer alphabet sizes, without a linearity
assumption.  Its main ingredients are synchronization of symbols on intersecting
supports, weighted bounds for $2$-intersecting families of four-sets, and a
joint estimate for the weight-two and weight-four layers in a common-three-core
configuration.  No computational verification is required.
\end{abstract}

\begin{keyword}
few-distance codes \sep Hamming space \sep intersecting families \sep
matching number \sep Krawtchouk polynomials \sep extremal combinatorics
\MSC[2020] 94B65 \sep 05D05 \sep 05C35
\end{keyword}
\end{frontmatter}

\section{Introduction and main results}\label{sec:intro}

Let $\Sigma$ be a finite alphabet of cardinality $q$, equipped with a
distinguished symbol $0$.  The \emph{Hamming distance} $d(x,y)$ between
$x,y\in\Sigma^n$ is the number of coordinates on which they differ.  For
$D\subseteq\{1,\ldots,n\}$, let
\[
 A_q(n,D)=\max\bigl\{|\calC|:\ \calC\subseteq\Sigma^n,
                  \ d(x,y)\in D\text{ whenever }x\ne y\in\calC\bigr\}.
\]
No linear or additive structure is imposed on $\calC$ or $\Sigma$.
The standard code consisting of the zero word and all words with a fixed
nonzero symbol in exactly two positions gives
\begin{equation}\label{eq:standard-lower}
 A_q(n,\{2,4\})\ge1+\binom n2.
\end{equation}
Indeed, two distinct nonzero words in this code are at distance two if their
supports intersect and at distance four if their supports are disjoint.

Prescribed-distance codes are classical objects in coding theory and algebraic
combinatorics, originating in the metric viewpoint of Hamming~\cite{Hamming50}.  Delsarte's association-scheme method gives a general framework
for their distance distributions~\cite{Delsarte73,BannaiIto84,BCN89}; refinements
involving the Terwilliger algebra yield nonbinary semidefinite bounds
\cite{Sch05,GST06}.  The links with two-distance sets and finite geometry
are developed in~\cite{DGS77,EFIN87,CK86}.  We refer to
\cite{MS77,vanLint99,HP03} for background on Hamming spaces.

The two-distance problem with allowed distances $\{2,4\}$ appears in the work
of Boyvalenkov, Delchev, Zinoviev, and Zinoviev~\cite{BDZZ21}, which also
provides tables of finite bounds.  For smaller alphabets, binary
few-distance results have been obtained by Barg, Glazyrin, Kao, Lai, Tseng,
and Yu~\cite{Barg24}; see also the related constructions in\cite{LRV23}.  Here we investigate how large the length must be,
as a function of $q$, before the quadratic construction in
\eqref{eq:standard-lower} is optimal.  The dependence on $q$ matters:
when $q\ge5$, the quadratic construction is not always optimal at smaller
lengths.

For $q\ge5$, define
\begin{equation}\label{eq:threshold}
 N(q):=\left\lceil
 \max\left\{2q+1,\frac{17+\sqrt{8q^2+96q-95}}{2}\right\}
 \right\rceil.
\end{equation}
Our principal results are as follows.

\begin{theorem}[Exact value at large length]\label{thm:main}
For every integer $q\ge5$ and every integer $n\ge N(q)$,
\begin{equation}\label{eq:main}
 A_q(n,\{2,4\})=1+\binom n2.
\end{equation}
In particular, the equality holds for $n\ge2q+1$ whenever $q\ge26$.
\end{theorem}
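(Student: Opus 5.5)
The lower bound is \eqref{eq:standard-lower}, so only $A_q(n,\{2,4\})\le1+\binom n2$ for $n\ge N(q)$ needs proof. Let $\calC$ be a $\{2,4\}$-code. Translate so that $0\in\calC$; this is possible symbol-wise by permuting each coordinate's alphabet. Then every other codeword has weight $2$ or $4$. Split $\calC\setminus\{0\}=W_2\cup W_4$ by weight and aim to bound $|W_2|+|W_4|\le\binom n2$.

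The first step is \emph{synchronization}. If $x,y\in W_2$ have intersecting supports sharing exactly one coordinate $i$, then $d(x,y)\in\{2,4\}$ together with $|\supp x\cup\supp y|=3$ forces $x_i=y_i$. If two weight-two words share the same support, they must differ in exactly those two coordinates. Similar constraints hold for pairs in $W_2\times W_4$ (distance $\le 6$ minus overlaps) and for $W_4\times W_4$. In particular, two weight-four words whose supports meet in at most one coordinate are at distance at least $6$, which is forbidden. So the supports of $W_4$ form a $2$-intersecting family of $4$-sets, counted with multiplicity from symbol choices. Likewise, a weight-two word and a weight-four word must share at least one support coordinate, with matching symbols on the overlap once the overlap forces distance $\le 4$.

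The second step bounds each layer. For $W_2$, a support graph $G$ on $[n]$ carries multiplicities: parallel words on one edge must differ in both coordinates, and at a vertex all incident words must agree on their common coordinate, except that a star/triangle structure allows several symbols. I would show $|W_2|\le\binom n2$ directly, with equality essentially only for the standard code. For $W_4$, I would use the structure of $2$-intersecting $4$-uniform families (Erd\H{o}s--Ko--Rado/Hilton--Milner type): either all sets contain a common pair, giving size $\le\binom{n-2}{2}$ times symbol multiplicity, or they lie in a bounded ground set of size $O(1)$, giving size $O(q^{c})$ independent of $n$. Each weight-four word $w$ then kills many weight-two words, namely those with support disjoint from $\supp w$, or meeting it inconsistently. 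Hence $|W_2|\le\binom n2-(\text{loss})$, and the loss should exceed $|W_4|$ once $n$ is large.

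The key case, and the main obstacle, is the \emph{common-three-core} configuration: all supports of $W_4$ contain a fixed $3$-set $T$, and the words agree on $T$ up to at most $q-1$ symbol patterns. Then $|W_4|$ can be as large as about $(q-1)(n-3)$. The $W_2$ words compatible with all of them are essentially confined to edges meeting $T$. I would do a joint count: parametrize $W_4$ by pairs (fourth coordinate, symbol) and $W_2$ by edges touching $T$ or inside the fourth-coordinate set, with symbol constraints. This yields an inequality of the form
\[
|W_2|+|W_4|\le 3n+(q-1)(n-3)+\tfrac12 m(m-1)+\dots\le\binom n2 .
\]
Solving the resulting quadratic in $n$ should give exactly the condition $n\ge\frac{17+\sqrt{8q^2+96q-95}}2$, while the common-pair case yields $n\ge2q+1$ from comparing $q(n-2)$-type constructions with $\binom n2$. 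Finally, checking $8q^2+96q-95\le(4q-15)^2$ for $q\ge26$ gives the last sentence of the theorem.
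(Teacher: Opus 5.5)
Your opening moves (normalization, synchronization, $2$-intersection of the four-supports) are right. But the structural dichotomy you then invoke is false, and the case it hides is the one that fixes the quadratic in $N(q)$.

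A $2$-intersecting family of four-sets with no common pair need not live on a bounded ground set. The family $\{F\in\binom{[n]}4:|F\cap[4]|\ge3\}$ is $2$-intersecting, has no common pair, and has $4n-15$ members; it is the Ahlswede--Khachatrian extremal family. Putting the symbol $1$ on every support coordinate already gives $4n-15$ admissible words of weight four, so $|W_4|$ is not $O(q^c)$ here. The paper bounds this case, together with a common pair whose external graph has matching number exactly two (via Erd\H{o}s--Gallai), by $|W_4|\le m^2+4n+10m-21$. With $|W_2|\le4n+4m-14$ (every two-support must meet a fixed four-support) this gives $|\calC|\le m^2+8n+14m-34$. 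The condition $m^2+8n+14m-34\le1+\binom n2$ is exactly the quadratic with larger root $\frac{17+\sqrt{8q^2+96q-95}}2$, and this root determines $N(q)$ for $q\le25$. Keeping the heavy contribution down to a single $m^2$ also needs local multiplicity bounds that your sketch lacks: at most $m^2$ words on one support (the paper uses Krawtchouk positivity), at most $m$ on a support meeting another in exactly two coordinates, and exactly one on a support with two different such intersections.

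In the common-pair case, ``$\binom{n-2}2$ times symbol multiplicity'' is far too weak. The claim ``the loss should exceed $|W_4|$'' also needs exact accounting: translating the standard code by one of its weight-two words gives $|W_4|=\binom{n-2}2$ with exactly $2n-3$ words left in $W_2$. The missing idea is that three pairwise disjoint external edges force, through exact two-overlaps, every word of $W_4$ to carry the same symbols on the common pair $P$. Deleting $P$ is then an isometry onto a weight-two code of length $n-2$, so the pure weight-two bound gives $|W_4|\le\binom{n-2}2$ (needing only $n\ge q+2$), and separately $|W_2|\le2n-3$.

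In the three-core case, your premise of at most $q-1$ patterns on $T$ is false. Restriction to $T$ is injective on each support, one support can carry up to $m^2$ words (e.g.\ $(a,b,a,b)$ with $a,b\ne0$), and supports of multiplicity greater than $m$ can coexist with further supports through $T$. The paper needs a separate antipodal-shell argument giving $|W_4|\le m^2+2m+2s-4$ there.

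Finally, the thresholds are attributed backwards. The value $2q+1$ comes from the three-core case with $|W_4|\le ms$, via the joint bound $1+|W_2|+|W_4|\le qn$. Comparison with the $q(n-2)$ construction yields only the lower bound $N_*(q)\ge2q-1$. A bound containing a term $(q-1)n$ can never produce a root of order $\sqrt2\,q$. Your displayed estimate $3n+(q-1)(n-3)+\tfrac12m(m-1)$ would only give the theorem for $n\gtrsim(1+\sqrt2)q$, which is beyond $N(q)=2q+1$ for large $q$. Also, active two-supports carry exactly one word; stars and triangles do not allow several symbols. Your closing check $8q^2+96q-95\le(4q-15)^2$ for $q\ge26$ is correct.
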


Define $N_*(q)$ to be the smallest integer such that
\eqref{eq:main} holds for every integer $n\ge N_*(q)$; the existence of
$N_*(q)$ follows from Theorem~\ref{thm:main}.

\begin{theorem}[Nearly exact threshold]\label{thm:threshold}
For every integer $q\ge5$,
\[
 N_*(q)\ge2q-1.
\]
For every $q\ge26$, moreover,
\begin{equation}\label{eq:bracket}
 2q-1\le N_*(q)\le2q+1.
\end{equation}
In particular, $N_*(q)=2q+O(1)$, and the leading constant is best
possible.
\end{theorem}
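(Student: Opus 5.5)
The plan is to exhibit, at the single length $n=2q-2$, a $\{2,4\}$-code that is strictly larger than $1+\binom n2$. Since \eqref{eq:main} then fails at $n=2q-2$, the definition of $N_*(q)$ forces $N_*(q)>2q-2$, i.e.\ $N_*(q)\ge2q-1$. The upper bound in \eqref{eq:bracket} will follow directly from Theorem~\ref{thm:main}.

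\emph{Construction.} Identify $\Sigma$ with $\{0,1,\dots,q-1\}$ and fix a nonzero symbol $c$. For $a\in\Sigma$ and $i\in\{3,\dots,n\}$, let $w_{a,i}$ be the word with $a$ in coordinates $1$ and $2$, the symbol $c$ in coordinate $i$, and $0$ elsewhere. Let $\calC$ be the set of all these words, so $|\calC|=q(n-2)$. I would check the distances in three cases.
\begin{enumerate}
\item Same $a$, different $i\ne j$: the words differ exactly in coordinates $i$ and $j$, so the distance is $2$.
\item Different $a\ne a'$, same $i$: the words differ exactly in coordinates $1$ and $2$, so the distance is $2$.
\item $a\ne a'$ and $i\ne j$: the words differ in coordinates $1,2,i,j$, so the distance is $4$.
\end{enumerate}
In particular all $q(n-2)$ words are distinct, so $A_q(n,\{2,4\})\ge q(n-2)$ for every $n\ge3$.

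\emph{Comparison.} The inequality $q(n-2)>1+\binom n2$ is equivalent to
\[
n^2-(2q+1)n+4q+2<0 .
\]
At $n=2q-2$ the left-hand side equals
\[
(2q-2)\bigl((2q-2)-(2q+1)\bigr)+4q+2=-3(2q-2)+4q+2=8-2q ,
\]
which is negative for every $q\ge5$. Hence $A_q(2q-2,\{2,4\})>1+\binom{2q-2}2$, so \eqref{eq:main} fails at $n=2q-2$. This gives $N_*(q)\ge2q-1$.

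\emph{Upper bound.} For $q\ge26$, I would verify that $2q+1\ge\bigl(17+\sqrt{8q^2+96q-95}\bigr)/2$. This is equivalent to
\[
(4q-15)^2\ge 8q^2+96q-95,
\]
that is, $8q^2-216q+320\ge0$, which holds for $q\ge26$. Therefore $N(q)=2q+1$, and Theorem~\ref{thm:main} gives $N_*(q)\le2q+1$, completing \eqref{eq:bracket}.

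\emph{Remarks.} Combining the two bounds gives $N_*(q)=2q+O(1)$. The leading constant $2$ is best possible, since the lower bound $2q-1$ already has leading coefficient $2$. The only real step is finding the construction; once it is written down, everything else is routine arithmetic.
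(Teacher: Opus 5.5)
Your proof is correct and follows the paper's argument. Your code $\{w_{a,i}\}$ (with $c=1$) is exactly the paper's $\mathcal D_{q,n}$ after swapping the symbols $0$ and $1$ in coordinate $3$; you evaluate it at the same length $n=2q-2$, and your upper bound likewise comes from Theorem~\ref{thm:main} via $q^2-27q+40\ge0$. The only difference is presentation: describing the code as the length-two repetition code times the weight-one words on the other $n-2$ coordinates makes the distance check a little more transparent than the paper's case list.
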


The method begins by choosing a codeword and independently relabeling the
alphabet in each coordinate so that this word becomes zero.  The remaining
words have weight two or four.  Their supports obey strong intersection
restrictions, but the supports alone do not encode the entire problem:
multiple assignments of nonzero symbols can occur on a single support.
The central observation is that an exact two-coordinate intersection between
distinct four-supports forces agreement of the symbols on the intersection.
We combine this with the complete nontrivial-intersection theorem
of Ahlswede and Khachatrian~\cite{AK96} and an elementary matching
analysis derived from the Erd\H{o}s--Gallai theorem~\cite{EG59}.
The set-theoretic perspective originates with the Erd\H{o}s--Ko--Rado theorem
\cite{EKR61}, and developed through nontrivial intersection bounds
\cite{HM67,Frankl78,AK97} and algebraic intersection techniques
\cite{FW81,FT18}.  For the associated graph-matching viewpoint, see
\cite{LovaszPlummer86}.

A large family of four-supports is either controlled by a common two-core,
or has bounded weighted size, or exhibits a common three-core.  The
last case requires a joint bound on the weight-two and weight-four layers;
separate estimates would lose the optimal leading constant in $N(q)$.
The detailed structure of the proof is reflected in the order of the
sections: elementary synchronization, local multiplicities, common-core
geometry, structural alternatives, and finally the two theorems.

\section{Normalization and weight-two supports}\label{sec:weight2}

Fix $q\ge5$ throughout, and put
\begin{equation}\label{eq:mdef}
 m:=q-1.
\end{equation}
Any permutation of the alphabet performed independently in each coordinate
preserves Hamming distance.  Thus we may arrange that $0\in\calC$.  Every
nonzero codeword then has Hamming weight two or four, so
\begin{equation}\label{eq:layers}
 \calC=\{0\}\mathbin{\dot\cup}C_2\mathbin{\dot\cup}C_4,
 \qquad C_j=\{x\in\calC:\wt(x)=j\}.
\end{equation}
Here $\supp(x)=\{i:x_i\ne0\}$ and $\wt(x)=|\supp(x)|$.
If a set $S\subseteq[n]$ occurs as the support of words in one of the two
layers, its \emph{multiplicity} is the number of words on that support.

\begin{lemma}[Synchronization at a single coordinate]\label{lem:single}
If $x\in C_2$, $y\in C_2\cup C_4$, and
$\supp(x)\cap\supp(y)=\{i\}$, then $x_i=y_i$.
\end{lemma}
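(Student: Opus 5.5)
The plan is to compute $d(x,y)$ directly, coordinate by coordinate, and to use that it must lie in $\{2,4\}$. Write $\supp(x)=\{i,j\}$ and $\supp(y)=T$ with $|T|=w\in\{2,4\}$, so that $\supp(x)\cap T=\{i\}$ and $j\notin T$. Coordinates outside $\{i,j\}\cup T$ contribute nothing, since both words are $0$ there. The coordinate $j$ contributes $1$, because $x_j\ne0=y_j$. Each coordinate of $T\setminus\{i\}$ contributes $1$, because $y$ is nonzero and $x$ vanishes there; these give $w-1$ in total. The remaining coordinate $i$ contributes $[x_i\ne y_i]$. Hence
\[
 d(x,y)=1+(w-1)+[x_i\ne y_i]=w+[x_i\ne y_i].
\]

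Next we use that $x\ne y$: they have different supports, so $x\neq y$ and therefore $d(x,y)\in\{2,4\}$ is even. Since $w$ is even, the indicator $[x_i\ne y_i]$ must be $0$, which gives $x_i=y_i$.

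The case $w=2$ deserves a check. There $d=2$ or $d=3$, so evenness rules out $3$ and forces equality. For $w=4$ the options are $d=4$ or $d=5$, and $5$ is excluded in the same way. No step is a real obstacle. The only care needed is the bookkeeping that $j\notin T$ and $T\setminus\{i\}$ is disjoint from $\supp(x)$, and both follow immediately from the hypothesis $\supp(x)\cap\supp(y)=\{i\}$.
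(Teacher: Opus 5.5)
Your proof is correct and follows the paper's argument. Both compute $d(x,y)=\wt(y)+[x_i\ne y_i]$ from the support bookkeeping and then rule out the odd values $3$ and $5$. Your explicit check that $x\ne y$, because the supports differ, is a small point the paper leaves implicit.
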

\begin{proof}
Let $\delta=1$ if $x_i\ne y_i$ and $\delta=0$ otherwise.  Since
$i$ is the only common nonzero coordinate,
\[
 d(x,y)=\wt(x)+\wt(y)-2+\delta.
\]
The right-hand side is $2+\delta$ or $4+\delta$, according as
$y$ has weight two or four.  Both $3$ and $5$ are forbidden, so
$\delta=0$.
\end{proof}

Call an occurring two-support \emph{isolated} if it is disjoint from every
other occurring two-support, and \emph{active} otherwise.

\begin{lemma}[Pure weight-two bound]\label{lem:pure}
An isolated two-support occurs at most $m$ times, while an active
two-support occurs exactly once.  Consequently, if $B\subseteq\Sigma^N$
is a collection of weight-two words with all mutual distances in
$\{2,4\}$ and $N\ge q$, then
\begin{equation}\label{eq:pure}
 |B|\le\binom N2.
\end{equation}
\end{lemma}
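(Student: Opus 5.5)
The plan is to analyse the words on a fixed two-support first, using only the distance condition among weight-two words, and then to count supports. Let $S=\{i,j\}$ be an occurring two-support and let $x\ne y$ be two words with $\supp(x)=\supp(y)=S$. Then $d(x,y)\in\{1,2\}$, and $d(x,y)=1$ is forbidden, so $x_i\ne y_i$ and $x_j\ne y_j$. Hence the words on $S$ have pairwise distinct nonzero symbols in coordinate $i$. Since only $q-1=m$ nonzero symbols are available, $S$ has multiplicity at most $m$. This settles the isolated case.

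For the active case, suppose $S=\{i,j\}$ meets another occurring two-support $T$. Two distinct two-sets meet in at most one point, so we may write $T=\{i,k\}$ with $k\ne j$. Fix a word $z$ on $T$. By Lemma~\ref{lem:single}, applied with $x$ running over the words on $S$ and $y=z$, every word $x$ on $S$ satisfies $x_i=z_i$. So all words on $S$ share the same symbol at $i$. By the first paragraph, distinct words on $S$ must differ at $i$, so $S$ carries exactly one word. Lemma~\ref{lem:single} uses only that the two words have weight two and that their distance lies in $\{2,4\}$. The zero word plays no role, so both statements hold for an arbitrary collection $B$ as in the consequence.

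For \eqref{eq:pure}, let $t$ be the number of isolated supports of $B$. These are pairwise disjoint, so they cover $2t\le N$ coordinates. No other occurring support meets a covered coordinate, because such a support would share a point with an isolated support. Hence every active support lies inside the remaining $N-2t$ coordinates, and
\[
 |B|\le \binom{N-2t}{2}+tm .
\]
It then suffices to check that $tm\le\binom N2-\binom{N-2t}2=t(2N-2t-1)$. For $t=0$ this is trivial. For $t\ge1$ we use $2t\le N$, which gives $2N-2t-1\ge N-1\ge q-1=m$ because $N\ge q$.

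The only point needing care is this last counting step. A naive count that charges each isolated support only for itself gives $\binom N2+t(m-1)$, which is too weak. The fix is to observe that isolation forbids every pair touching the $2t$ covered coordinates, and the hypothesis $N\ge q$ is exactly what makes that loss outweigh the multiplicity gain of at most $m$ per isolated support.
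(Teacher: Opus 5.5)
Your proof is correct and follows essentially the same route as the paper. Injectivity of the projection to one coordinate gives multiplicity at most $m$, Lemma~\ref{lem:single} forces a common symbol at the shared coordinate of an active support, and the count $|B|\le tm+\binom{N-2t}{2}$ is compared with $\binom N2$ via $t(2N-2t-1)\ge tm$, using $2t\le N$ and $N\ge q$; your explicit remark that Lemma~\ref{lem:single} needs only the pairwise distance condition, and so applies to an arbitrary collection $B$, spells out a point the paper leaves implicit.
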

\begin{proof}
Two distinct words on the same two-support must differ in both its
coordinates, since their distance is at most two and cannot be one.
Projection onto either coordinate is therefore injective into the
$m$ nonzero symbols.  This proves the first multiplicity bound.

If a support $S$ is active, there is a distinct two-support $T$
with $|S\cap T|=1$.  Two different words on $S$ would disagree at
the coordinate $S\cap T$, whereas Lemma~\ref{lem:single}, applied to
both against a fixed word on $T$, forces agreement.  Thus $S$
has multiplicity one.

Let $h$ be the number of isolated supports in $B$.  They are pairwise
disjoint and occupy $2h$ coordinates; all active supports avoid
these coordinates.  Therefore
\[
 |B|\le mh+\binom{N-2h}{2}.
\]
Using $h\le N/2$ and $N\ge q=m+1$, we obtain
\[
 \binom N2-mh-\binom{N-2h}{2}
 =h(2N-2h-q)\ge h(N-q)\ge0.
\]
This proves \eqref{eq:pure}.
\end{proof}

\begin{lemma}[A linear estimate for $C_2$]\label{lem:C2-linear}
If $C_4\ne\varnothing$, then
\begin{equation}\label{eq:C2-linear}
 |C_2|\le4n+4m-14.
\end{equation}
\end{lemma}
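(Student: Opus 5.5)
The plan is to fix one word $y\in C_4$, put $F=\supp(y)$ with $|F|=4$, and show that every two-support must meet $F$ and carries very few words; the bound then follows by counting the two-supports according to how they meet $F$.

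\textbf{Step 1 (distance to $y$).} Take $x\in C_2$ with support $S$. Then
\[
d(x,y)=6-2|S\cap F|+\delta,
\]
where $\delta$ is the number of coordinates of $S\cap F$ on which $x$ and $y$ disagree. This gives three cases.
\begin{enumerate}
\item If $S\cap F=\varnothing$, then $d(x,y)=6$, which is forbidden. So every occurring two-support meets $F$.
\item If $|S\cap F|=1$, Lemma~\ref{lem:single} says $x$ agrees with $y$ on the common coordinate.
\item If $S\subseteq F$, then $d(x,y)=2+\delta$, which only forces $\delta\in\{0,2\}$.
\end{enumerate}

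\textbf{Step 2 (supports meeting $F$ in one point).} Such a support has the form $\{i,j\}$ with $i\in F$ and $j\notin F$. Two distinct words on it would have to differ in both coordinates; this is the first observation in the proof of Lemma~\ref{lem:pure}. But both agree with $y$ at $i$. Hence these supports have multiplicity one. There are at most $4(n-4)$ of them, so they contribute at most $4n-16$ words.

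\textbf{Step 3 (supports inside $F$).} There are at most $6$ such supports. I would invoke Lemma~\ref{lem:pure}: an active support has multiplicity one, and an isolated one has multiplicity at most $m$. Two isolated supports are disjoint, so at most two of the supports inside $F$ are isolated. I then split into cases by the number of isolated supports.
\begin{enumerate}
\item \emph{No isolated support.} Then every support has multiplicity one. By Step~1 every support meets $F$, so the total is at most $6+4(n-4)=4n-10$. This is at most $4n+4m-14$ since $m\ge4$.
\item \emph{Exactly one isolated support}, say $\{1,2\}\subseteq F$. Every other support avoids coordinates $1,2$, yet meets $F$ by Step~1, so it contains $3$ or $4$. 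The count is at most $m+1+2(n-4)$, which is comfortably below $4n+4m-14$.
\item \emph{Two isolated supports.} These must be, say, $\{1,2\}$ and $\{3,4\}$. Then no other support can meet $F$ at all, so $|C_2|\le2m$. This is at most $4n+4m-14$ because $n\ge4$.
\end{enumerate}

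In every case $|C_2|\le 4n+4m-14$, which is \eqref{eq:C2-linear}.

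There is no real obstacle. The only delicate point is Step~3: bounding the words with support inside $F$ naively by $m$ per support would give $6m$, which exceeds $4m+2$. The isolated/active dichotomy of Lemma~\ref{lem:pure} is what removes this loss.
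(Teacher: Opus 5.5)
Your proof is correct and follows the paper's argument: fix a word of $C_4$, note that every two-support meets its support $F$ (so there are at most $4n-10$ candidate supports), and apply the isolated/active dichotomy of Lemma~\ref{lem:pure}. The paper skips your Step~2 and the case split. It simply observes that at most four pairwise disjoint isolated supports can meet $F$, each adding at most $m-1$ words, which gives $4n-10+4(m-1)$ at once; your extra use of Lemma~\ref{lem:single} against $y$ confines excess multiplicity to the at most two isolated supports inside $F$, giving a slightly stronger bound than the lemma needs.
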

\begin{proof}
Fix $z\in C_4$.  Every occurring two-support meets $\supp(z)$,
otherwise its codeword would be at distance six from $z$.  There are
\[
 \binom n2-\binom{n-4}{2}=4n-10
\]
two-subsets meeting a fixed four-set.  At most four occurring
two-supports are isolated, since they are disjoint and each meets
$\supp(z)$.  By Lemma~\ref{lem:pure}, each isolated support may
contribute at most $m-1$ extra words beyond one word per support.
Hence $|C_2|\le4n-10+4(m-1)$.
\end{proof}

\section{Multiplicity bounds on four-supports}\label{sec:local}

Let
\begin{equation}\label{eq:H}
 \begin{aligned}
 \calH&:=\{\supp(x):x\in C_4\}\subseteq\binom{[n]}4,\\
 \mu(S)&:=|\{x\in C_4:\supp(x)=S\}|\qquad(S\in\calH).
 \end{aligned}
\end{equation}
In particular, $|C_4|=\sum_{S\in\calH}\mu(S)$.

\begin{lemma}[Multiplicity on one four-support]\label{lem:local}
Every $S\in\calH$ has multiplicity
\begin{equation}\label{eq:local}
 \mu(S)\le m^2.
\end{equation}
\end{lemma}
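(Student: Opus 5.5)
Fix $S\in\calH$ and write $S=\{a,b,c,d\}$; let $W$ be the set of words of $C_4$ with support exactly $S$, so $\mu(S)=|W|$. Each $x\in W$ is determined by its restriction to $S$, a vector in $(\Sigma\setminus\{0\})^4$, that is, a word of length four over the $m$ nonzero symbols. The plan is to show that two distinct words of $W$ cannot agree on any two coordinates of $S$ simultaneously. Granting this, the projection $W\to(\Sigma\setminus\{0\})^2$, $x\mapsto(x_a,x_b)$, is injective, and so $|W|\le m^2$, which is \eqref{eq:local}.

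For the key step, take distinct $x,y\in W$. Since both have support $S$, they differ only inside $S$, so $d(x,y)$ is the number of coordinates of $S$ on which they disagree. This number is nonzero and must lie in $\{2,4\}$; in particular $d(x,y)\ne1,3$. If $x$ and $y$ agreed on two coordinates, say $a$ and $b$, they could disagree in at most the two remaining coordinates $c,d$. Distance $1$ is forbidden, so they must then disagree in both $c$ and $d$, giving $d(x,y)=2$. This is allowed, so the distance alone does not exclude the case. I therefore expect this to be the main obstacle: the bare distance constraint inside $S$ only says that $W$, viewed as a length-four code over $m$ symbols, has all distances in $\{2,4\}$, and the distance-$2$ pairs need a separate argument.

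To get past this, I would first try the following. Let $W'\subseteq W$ be a largest subfamily in which every two words disagree in at least three coordinates. A code of length four with minimum distance three has size at most $m^2$ by the Singleton bound, since projection onto two coordinates is injective. So it suffices to show that distance-$2$ pairs do not increase the count. Here I would use the parity-type structure forced by $\{2,4\}$: define a graph on $W$ joining words at distance $2$. Two distance-$2$ pairs sharing a word may combine to distance $1$ or $3$, which is forbidden. I would analyse the possible "agreement patterns", namely which pair of coordinates of $S$ two words agree on. The goal is to show that $W$ splits according to these patterns in such a way that a fixed coordinate pair, for instance $\{a,b\}$, together with a suitable complementary pair, still injects $W$ into $m^2$ values. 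Concretely, I would try to show that for each value of $(x_a,x_b)$ there is at most one word, unless the words sharing it are pairwise at distance $4$ on the other coordinates; in that case I would switch to projecting onto $(x_a,x_c)$.

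If this case analysis becomes messy, the fallback is a counting argument. Count, for each of the $m^2$ values of $(x_a,x_b)$, the fibre of $W$. Words in one fibre agree on $a$ and $b$, so they must pairwise disagree on both $c$ and $d$; hence each fibre has size at most $m$. Then bound the number of nonempty fibres by $m$ using a symmetric argument on $(x_c,x_d)$. This would give a weaker bound, which can be tightened by double counting the distance-$2$ relations so that the total is at most $m^2$. The recurring tool throughout is the injectivity argument already used in Lemma~\ref{lem:pure}: forbidding distances $1$ and $3$ forces disagreement in the complementary coordinates.
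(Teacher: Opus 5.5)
Your proposal does not prove $\mu(S)\le m^2$, because each of its three routes fails.

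\textbf{The key claim is false.} You plan to show that distinct words of $W$ never agree on two coordinates of $S$. Take the words on $S$ whose restrictions to $(a,b,c,d)$ are $(\alpha,\beta,\alpha,\beta)$, for $\alpha,\beta\in\Omega$. There are $m^2$ of them, all their distances lie in $\{2,4\}$, and some pairs agree on $\{a,c\}$ or on $\{b,d\}$.

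\textbf{Switching projections does not help.} The binary even-weight code of length four, written with two nonzero symbols, has all distances in $\{2,4\}$ and realizes all six agreement pairs. So no projection onto two coordinates is injective on it. For $m=2$ this code has $8>m^2$ words, so a correct proof must use a lower bound on $m$.

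\textbf{The reduction through $W'$ fails.} Inside $W$, disagreeing in at least three coordinates means distance four. Then every single coordinate is injective on $W'$, so $|W'|\le m$. In the example above, the distance-two pairs raise the count from $m$ to $m^2$.

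\textbf{The fallback does not close.} The fibres of $(x_a,x_b)$ do have size at most $m$. But the example has $m^2$ nonempty fibres, so their number cannot be bounded by $m$. The final ``double counting'' is never specified.

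The paper argues globally instead. Delsarte positivity for the length-four Krawtchouk polynomials $K_1$ and $K_3$, combined as $(m-3)$ times the first inequality plus the second, gives $4(m-2)(m^2-1-a_2-a_4)\ge0$, that is, $|W|\le m^2$.

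Your fibre idea can also be repaired elementarily if you fibre over a \emph{single} coordinate. For $\alpha\in\Omega$ put $W_\alpha=\{x\in W:x_d=\alpha\}$. Two distinct words of $W_\alpha$ agree at $d$, so their restrictions to $\{a,b,c\}$ are distinct and at distance at most three. Since the distance lies in $\{2,4\}$, it is exactly $2$. By Lemma~\ref{lem:distance2-3}, a length-three code with all distances equal to $2$ has at most $\max\{m,4\}=m$ words. Summing over the $m$ values of $\alpha$ gives $\mu(S)\le m^2$. The missing ingredient in your proposal is this constant-distance bound in length three, which is where $m\ge4$ is used.
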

\begin{proof}
Restrict the codewords supported on $S$ to $S$.  They form a code
$B\subseteq\Omega^4$, where $\Omega=\Sigma\setminus\{0\}$ has $m\ge4$
symbols and all nonzero mutual distances are in $\{2,4\}$.
For $i\in\{0,2,4\}$, let
\[
 a_i=\frac{1}{|B|}
       |\{(x,y)\in B^2:d(x,y)=i\}|.
\]
Thus $a_0=1$ and $|B|=1+a_2+a_4$.

For $j=1,3$, consider the length-four $m$-ary Krawtchouk polynomial
\[
 K_j(i)=\sum_{s=0}^j(-1)^s(m-1)^{j-s}
             \binom{i}{s}\binom{4-i}{j-s}.
\]
The Hamming positivity inequality is
\begin{equation}\label{eq:positive}
 \sum_{i\in\{0,2,4\}}a_iK_j(i)\ge0\quad(j=1,3).
\end{equation}
For completeness, choose an orthonormal basis
$\phi_0\equiv1,\phi_1,\ldots,\phi_{m-1}$ of the real-valued functions
on $\Omega$ for the normalized counting inner product.  Then
\[
 \sum_{r=1}^{m-1}\phi_r(a)\phi_r(b)
 =m\mathbf1_{\{a=b\}}-1.
\]
For $j=1,3$, expand the nonnegative quantity
\[
 \frac1{|B|}\sum_{\substack{I\subseteq[4]\\|I|=j}}
  \sum_{\alpha\in\{1,\ldots,m-1\}^{I}}
  \left(\sum_{x\in B}\prod_{i\in I}\phi_{\alpha_i}(x_i)\right)^2.
\]
A pair of words at distance $i$ contributes $K_j(i)$, proving
\eqref{eq:positive}.  Thus no linearity of $B$ is needed.

The relevant values are
\[
\begin{array}{c|ccc}
 &i=0&i=2&i=4\\ \hline
K_1(i)&4(m-1)&2(m-2)&-4\\
K_3(i)&4(m-1)^3&-2(m-2)(m-1)&-4
\end{array}
\]
and hence
\begin{align*}
 4(m-1)+2(m-2)a_2-4a_4&\ge0,\\
 4(m-1)^3-2(m-2)(m-1)a_2-4a_4&\ge0.
\end{align*}
Multiply the first inequality by $m-3\ge0$ and add the second.
After simplification,
\[
 4(m-2)\bigl(m^2-1-a_2-a_4\bigr)\ge0.
\]
Since $m\ge4$, this yields $|B|=1+a_2+a_4\le m^2$.
\end{proof}

\begin{lemma}[Exact overlaps]\label{lem:overlap}
If $S,T\in\calH$ are distinct and $|S\cap T|=2$, then
\begin{equation}\label{eq:overlap}
 \mu(S),\mu(T)\le m.
\end{equation}
If, in addition, $U\in\calH$ satisfies $|S\cap U|=2$ and
$S\cap U\ne S\cap T$, then $\mu(S)=1$.
\end{lemma}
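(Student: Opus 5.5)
The plan is to compute exactly the distance between a word supported on $S$ and a word supported on $T$ when $|S\cap T|=2$, in the spirit of Lemma~\ref{lem:single}, and to show that it forces the symbols to agree on the overlap. Counting the symmetric difference will then control how many words a single support can carry.

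\emph{Step 1 (synchronization on a two-element overlap).} Let $x\in C_4$ have support $S$ and $y\in C_4$ have support $T$, and let $\delta\in\{0,1,2\}$ be the number of coordinates of $S\cap T$ on which $x$ and $y$ disagree. Outside $S\cup T$ both words are zero. On $S\triangle T$ exactly one of them is nonzero. Hence
\[
 d(x,y)=|S\triangle T|+\delta=4+\delta .
\]
Distance $5$ is forbidden, and so is distance $6$, since $6\notin\{2,4\}$. Therefore $\delta=0$: every word on $S$ agrees with every word on $T$ on $S\cap T$. Fixing one word $y$ on $T$ shows that all words on $S$ carry the same symbols on the two coordinates of $S\cap T$.

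\emph{Step 2 (the bound $\mu(S)\le m$).} Write $S=(S\cap T)\cup\{k,l\}$. Two distinct words on $S$ agree on $S\cap T$, so they can differ only at $k$ and $l$. Their distance lies in $\{1,2\}$ and must equal $2$, so they differ at $k$. Projection of the words on $S$ onto coordinate $k$ is therefore injective into the $m$ nonzero symbols, which gives $\mu(S)\le m$. The argument is symmetric in $S$ and $T$, so $\mu(T)\le m$ as well.

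\emph{Step 3 (the bound $\mu(S)=1$).} Apply Step~1 also to the pair $S,U$. All words on $S$ then agree on $(S\cap T)\cup(S\cap U)$. This is a union of two distinct $2$-subsets of $S$, so it has at least three elements. Two distinct words on $S$ would thus differ in at most one coordinate, i.e.\ be at distance $1$, which is impossible. Hence $\mu(S)=1$.

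The only point requiring care is Step~1, namely checking that the distance formula is exactly $4+\delta$; this uses that both supports have size four and meet in exactly two points, so $|S\triangle T|=4$. Everything after that is the same injectivity argument as in Lemma~\ref{lem:pure}.
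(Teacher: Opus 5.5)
Your proposal is correct and follows the paper's proof essentially step for step. You use the same symmetric-difference count to force agreement on $S\cap T$, the same injective projection onto a remaining coordinate to get $\mu(S)\le m$, and the same observation that two distinct fixed pairs cover at least three coordinates of $S$, which gives $\mu(S)=1$.
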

\begin{proof}
Let $P=S\cap T$.  For any $x,y\in C_4$ supported on $S,T$,
respectively, the four coordinates of $S\triangle T$ already
contribute four to $d(x,y)$.  Hence $x|_P=y|_P$.
In particular, every word on $S$ has the same symbols on $P$.
Two distinct such words can differ only on the other two coordinates
of $S$, and must differ in both.  Projection to either of these
coordinates is injective into an $m$-symbol alphabet.  Therefore
$\mu(S)\le m$, and symmetry gives the same bound on $T$.

The second exact overlap with $U$ fixes the symbols of every word on
$S$ on another two-set $S\cap U$.  Since the two fixed pairs are
distinct, their union contains at least three of the four
coordinates of $S$.  Two different words on $S$ would have distance
at most one, which is impossible.
\end{proof}

\begin{lemma}[Intersecting supports]\label{lem:2int}
Any two distinct members of $\calH$ intersect in at least two coordinates.
\end{lemma}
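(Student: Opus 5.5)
We argue directly from the distance formula, as in the proof of Lemma~\ref{lem:single}. Take distinct $S,T\in\calH$ and words $x,y\in C_4$ with $\supp(x)=S$ and $\supp(y)=T$. Put $k=|S\cap T|$, and let $\delta$ be the number of coordinates $i\in S\cap T$ with $x_i\ne y_i$. Every coordinate of $S\triangle T$ contributes exactly one to $d(x,y)$. A coordinate in $S\cap T$ contributes one precisely when the two symbols there differ. Hence
\[
 d(x,y)=|S\triangle T|+\delta=8-2k+\delta,\qquad 0\le\delta\le k .
\]

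We now rule out $k=0$ and $k=1$. If $k=0$, then $d(x,y)=8$, which is not in $\{2,4\}$. If $k=1$, then $d(x,y)\in\{6,7\}$, which is also forbidden. Therefore $k\ge2$, and the lemma follows because $S$ and $T$ were arbitrary distinct members of $\calH$.

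There is no real obstacle here. The only point needing care is that a common coordinate contributes one to the distance exactly when the symbols there disagree; no linear structure on $\Sigma$ is used. The same computation also explains why the case $k=2$ forces agreement on $S\cap T$ in Lemma~\ref{lem:overlap}: there $d(x,y)=4+\delta$, so $\delta=0$.
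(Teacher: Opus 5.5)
Your proposal is correct and takes essentially the same approach as the paper. You both write $d(x,y)=8-2k+\delta$ and observe that $k\le1$ forces $d(x,y)\ge6$, which is not an allowed distance. Your explicit case split ($k=0$ giving $8$, $k=1$ giving $6$ or $7$) is just an unpacked form of the paper's one-line inequality $8-2t+\delta\le4$.
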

\begin{proof}
For $S,T\in\calH$, choose words $x,y$ with these supports, and put
$t=|S\cap T|$.  If $\delta$ counts symbol disagreements on $S\cap T$,
then $d(x,y)=8-2t+\delta\le4$, so $t\ge2$.
\end{proof}

\section{Three-coordinate geometry and common cores}\label{sec:threecore}

The next elementary lemmas quantify how much multiplicity can remain
when several four-supports share three coordinates.

\begin{lemma}[Distance-two codes in three coordinates]\label{lem:distance2-3}
Let $\Omega$ be an alphabet of size $m\ge4$.  If $B\subseteq\Omega^3$
has distance two between any distinct pair of words, then $|B|\le m$.
\end{lemma}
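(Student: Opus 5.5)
Write a word of $B$ as $(a,b,c)$. Since every two distinct words of $B$ are at distance exactly two, any two of them agree in exactly one of the three coordinates. The plan is a short case analysis on whether some coordinate is constant on $B$. Throughout we assume $|B|\ge2$, since otherwise there is nothing to prove.

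\emph{Case 1: some coordinate takes a single value on $B$,} say the first coordinate is always $a_0$. Then any two distinct words agree in the first coordinate, so they must disagree in both the second and the third. Hence projection to the second coordinate is injective on $B$, and $|B|\le m$.

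\emph{Case 2: every coordinate takes at least two values on $B$.} Here the goal is the much stronger bound $|B|\le 4\le m$.
\begin{itemize}
\item Pick two words that differ in the first coordinate. They must agree in one other coordinate, so after relabeling symbols and renaming coordinates we may take them to be $x=(1,1,1)$ and $y=(2,1,2)$.
\item Any third word $w=(a,b,c)$ agrees with each of $x$ and $y$ in exactly one coordinate.
\item If $b=1$, then $w$ agrees with $x$ in the second coordinate, so $a\ne1$ and $c\ne1$. Likewise $a\ne2$ and $c\ne2$. But then $w$ agrees with neither $x$ nor $y$ anywhere except the second coordinate, so it lies in the same class as $x,y$ with respect to the constant second coordinate.
\item If $b\ne1$, then $w$ agrees with $x$ in the first or third coordinate, and with $y$ in the first or third coordinate. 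It cannot use the first coordinate for both, since $1\ne2$; the same holds for the third. So $w=(1,b,2)$ or $w=(2,b,1)$.
\item Two words of the form $(1,b,2)$ and $(1,b',2)$ with $b\ne b'$ would agree in two coordinates, which is impossible. So there is at most one word of each of these two forms.
\item Moreover $(1,b,2)$ and $(2,b',1)$ must agree in the second coordinate, which forces $b=b'$.
\end{itemize}

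Consequently $B$ consists of words with second coordinate $1$, together with at most two further words $(1,b,2)$ and $(2,b,1)$ sharing the same second coordinate $b$. The crux is to bound the number of words with second coordinate $1$ when such extra words are present.
\begin{itemize}
\item Suppose $v=(a,1,c)$ and $w=(1,b,2)$ with $b\ne1$. They must agree in exactly one coordinate, so either $a=1$ or $c=2$, but not both.
\item Among the words with second coordinate $1$, the first coordinates are pairwise distinct and the third coordinates are pairwise distinct, as in Case 1.
\item Hence at most one of these words has $a=1$ and at most one has $c=2$. Since every such word satisfies one of the two conditions, there are at most two words with second coordinate $1$.
\end{itemize}
Combining, $|B|\le2+2=4\le m$ whenever some word has $b\ne1$. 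If no word has $b\ne1$, then the second coordinate is constant and Case 1 gives $|B|\le m$.

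The main obstacle is this last step, which controls the class with constant second coordinate once extra words exist. It needs the observation that each extra word pins down the first or third symbol of every word in that class, together with the use of $m\ge4$ to absorb the small exceptional configurations.
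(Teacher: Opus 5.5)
Your proof is correct and follows essentially the same route as the paper: normalize two words that share one coordinate, classify every other word as either lying in the class with that shared symbol or as one of at most two ``crossing'' words, and show that a crossing word forces that class to consist of the two normalized words only, giving $|B|\le\max\{m,4\}=m$. The Case~1/Case~2 split is only a wrapper, since the Case~2 argument works for any two distinct words and Case~1 covers the situation with no crossing words. The case where only $(2,b,1)$ occurs, which you leave implicit, follows by the same pinning argument with the conditions $a=2$ or $c=1$.
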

\begin{proof}
If $|B|\le1$ there is nothing to prove.  Choose two words and
normalize them, by coordinatewise alphabet permutations, to
$000$ and $110$.  A third word at distance two from both
has one of the following forms:
\[
 (a,b,0)\quad(a,b\notin\{0,1\}),\qquad
 (1,0,c),\qquad(0,1,c)\quad(c\ne0).
\]
Words of the first form have pairwise distance two only if their
first coordinates, and separately their second coordinates, are
distinct.  There are therefore at most $m-2$ such words.
At most one word of each of the latter two forms can occur, and
neither can coexist with a word of the first form, because that
would give distance three.  Thus $|B|\le\max\{m,4\}=m$.
\end{proof}

\begin{lemma}[Three-coordinate projection]\label{lem:projection}
Let $\Omega$ have $m\ge4$ symbols, and let $B\subseteq\Omega^4$
have all nonzero distances in $\{2,4\}$.  Suppose the projection of
$B$ onto a specified three-set of coordinates contains no pair of
words at distance three.  Then $|B|\le m$.
\end{lemma}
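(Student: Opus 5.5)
The plan is to show first that the hypothesis forces $B$ to be an \emph{equidistant} code with all distances equal to $2$, and then to bound such codes by the normalization argument of Lemma~\ref{lem:distance2-3}, run in length four.

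\textbf{Step 1: all distances in $B$ equal $2$.} Relabel coordinates so the specified three-set is $\{1,2,3\}$, and let $\pi$ denote projection onto it. For distinct $x,y\in B$,
\[
 d(x,y)=d(\pi x,\pi y)+\mathbf 1_{\{x_4\ne y_4\}},
\]
and by hypothesis $d(\pi x,\pi y)\in\{0,1,2\}$. Hence $d(x,y)\le3$. Since $d(x,y)\in\{2,4\}$, this forces $d(x,y)=2$ for every distinct pair.

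\textbf{Step 2: normalize.} Assume $|B|\ge2$, since otherwise $|B|\le1\le m$. Pick two words and apply coordinatewise alphabet permutations, which preserve distances, to make them $0000$ and $1100$. Any third word $w$ is at distance $2$ from both. Distance $2$ from $0000$ means exactly two coordinates of $w$ are nonzero; distance $2$ from $1100$ then restricts how these nonzero coordinates sit. Checking cases, $w$ has one of six forms:
\[
 (a,b,0,0)\ (a,b\notin\{0,1\}),\quad (1,0,c,0),\quad (1,0,0,c),\quad (0,1,c,0),\quad (0,1,0,c)\quad (c\ne0).
\]
In the last four forms exactly one of the first two coordinates is nonzero, and it must agree with $1100$ there.

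\textbf{Step 3: count.} The bound now follows from pairwise compatibility among these forms.
\begin{itemize}
\item Words of the first form must have pairwise distinct first coordinates and pairwise distinct second coordinates. Both range over the $m-2$ symbols outside $\{0,1\}$, so there are at most $m-2$ such words.
\item A word of the first form is at distance $3$ from any word of the other four forms, so the two families cannot coexist.
\item Among the other four forms, write $A=(1,0,c,0)$, $B'=(1,0,0,c')$, $C=(0,1,c'',0)$, $D=(0,1,0,c''')$. Two words of the same form are at distance $1$, so at most one word of each form occurs.
\item $A$ and $D$ are at distance $4$, which the equidistance of Step~1 forbids. Likewise $B'$ and $C$ are at distance $4$.
\item The remaining pairs can coexist under suitable symbol conditions. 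For example, $A$ and $C$ need $c=c''$.
\end{itemize}
The incompatibility graph on $\{A,B',C,D\}$ is therefore a perfect matching, $A$--$D$ and $B'$--$C$. So at most two of these four words occur.

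Combining, $|B|\le 2+\max\{m-2,\,2\}=\max\{m,4\}=m$, using $m\ge4$.

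The main obstacle is the case enumeration in Step 3: the pairwise compatibilities among the mixed forms must be checked carefully. The key ingredient is Step 1, which excludes distance $4$ and so kills the pairs $A$--$D$ and $B'$--$C$; without it, those pairs would be allowed and the count would not close.
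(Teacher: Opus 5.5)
Your proof is correct, and it takes a different route from the paper. Your Step~1 is the key move: projected distances are at most two and the omitted coordinate adds at most one, so every distance in $B$ is at most three and hence exactly two. The hypothesis is therefore equivalent to $B$ being a distance-two equidistant code, and the choice of three-set is irrelevant.

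The paper's rules (projected distance one forces different omitted symbols, projected distance two forces equal ones) implicitly encode the same fact, but the paper does not use it directly. Instead it works with the injective projection $Y\subseteq\Omega^3$ and the leaf label $f$, which is constant on components of the distance-two graph on $Y$. It then applies Lemma~\ref{lem:distance2-3} to each component. Finally, it treats the case of several components by a separate normalization: $000,110$ inside one component, and only $100$ or $010$ outside it.

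You instead do a single normalization in length four. The four mixed forms have incompatibility graph the perfect matching $A$--$D$, $B'$--$C$, because those pairs are at distance four. This gives $\max\{m,4\}$ directly.

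What each route buys: yours is shorter and more transparent, and it does not need Lemma~\ref{lem:distance2-3} at all. The paper's version reuses that lemma and previews the leaf-labelling-on-components viewpoint that reappears in Lemmas~\ref{lem:heavy-neighbor} and~\ref{lem:threecore}. Those later lemmas invoke only the statement, not the proof, so your argument is a complete replacement.

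One cosmetic slip: in Step~2 you announce six forms but list five. The list itself is correct, since the remaining two-subset $\{3,4\}$ gives distance four from $1100$ and is excluded by equidistance.
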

\begin{proof}
The projection is injective: otherwise two different words of $B$
would differ only in the omitted coordinate.  Denote the projected
set by $Y$, and let $f:Y\to\Omega$ record the omitted coordinate.
For distinct $u,v\in Y$, the projected distance is one or two, and
\begin{equation}\label{eq:proj-rules}
 d(u,v)=1\Rightarrow f(u)\ne f(v),\qquad
 d(u,v)=2\Rightarrow f(u)=f(v).
\end{equation}
Make a graph on $Y$, joining distinct vertices when their distance
is two.  The value of $f$ is constant on each connected component.
Within a component, all distinct projected words therefore have
distance two; by Lemma~\ref{lem:distance2-3}, each component has
at most $m$ vertices.

If there is only one component, we are done.  Otherwise any pair
from different components has projected distance one.
If every component is a singleton, all pairs in $Y$ have distance
one, so all words vary in a single common coordinate and $|Y|\le m$.
Otherwise choose two vertices in one component, normalized to
$000,110$.  A word outside that component has distance one from
both and hence equals either $100$ or $010$.  If just one of
these occurs, say $100$, then any further vertex in the original
component must be $(1,0,c)$ with $c\ne0$.  There is at most one
such vertex because two of them would have distance one.
If both $100$ and $010$ occur, there can be no further vertex in
that component.  In either case $|Y|\le4\le m$.
\end{proof}

\begin{lemma}[Antipodal shell]\label{lem:shell}
If $u,v\in\Omega^3$ have distance three, put
\[
 X(u,v)=\{w\in\Omega^3:d(w,u)=d(w,v)=2\}.
\]
The graph on $X(u,v)$ joining pairs at Hamming distance two is
triangle-free.
\end{lemma}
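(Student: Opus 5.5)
We prove the stronger statement that the distance-two graph on $X(u,v)$ is bipartite. The idea is to attach to each $w\in X(u,v)$ a permutation and show that every edge joins permutations of opposite parity.

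\emph{Normalization.} Permuting the alphabet independently in each coordinate preserves Hamming distances. Since $u$ and $v$ differ in all three coordinates, we may therefore assume $u=000$ and $v=111$ for two distinct symbols $0,1\in\Omega$.

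\emph{Shape of the shell.} Take $w\in X(u,v)$. Let $a$ be the number of coordinates where $w_i=0$, let $b$ be the number where $w_i=1$, and let $c$ be the number of remaining coordinates, so $a+b+c=3$. From $d(w,u)=3-a=2$ we get $a=1$, and from $d(w,v)=3-b=2$ we get $b=1$; hence $c=1$. So every $w\in X(u,v)$ has exactly one coordinate equal to $0$, one equal to $1$, and one coordinate carrying a symbol $s(w)\notin\{0,1\}$. Record the positions of the three entries as a bijection $\pi_w$ from the labels $\{0,1,*\}$ onto $\{1,2,3\}$; we call $\pi_w$ the \emph{type} of $w$.

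\emph{Adjacency changes type by a transposition.} Let $w\ne w'$ in $X(u,v)$ with $d(w,w')=2$. If $\pi_w=\pi_{w'}$, the two words differ only in the $*$ coordinate, so $d(w,w')=1$. Hence adjacent words have distinct types.

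Suppose now the types differ by a $3$-cycle, for example $w=(0,1,a)$ and $w'=(b,0,1)$ or $w'=(1,b,0)$. Then $w$ and $w'$ differ in every coordinate, because the starred symbols lie outside $\{0,1\}$. So $d(w,w')=3$.

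The only remaining case is that the types differ by a transposition. (Checking directly: a transposition involving $*$ always gives distance $2$, while the transposition of $0$ and $1$ gives distance $2$ exactly when $s(w)=s(w')$. This finer information is not needed.) Thus every edge of the graph joins words whose types differ by a transposition.

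\emph{Conclusion.} Colour each $w$ by the sign of $\pi_w$, viewed as a permutation of $S_3$ after fixing an identification of the labels with positions. Along every edge the sign flips. The graph is therefore bipartite, so it has no odd cycle and in particular no triangle.

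The only delicate step is the case analysis showing that types differing by a $3$-cycle are at distance exactly three. It is a direct check that uses $s(w),s(w')\notin\{0,1\}$.
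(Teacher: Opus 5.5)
Your proof is correct, and it reaches the conclusion by a different route from the paper. After the same normalization $u=000$, $v=111$ and the same observation that each shell word has one $0$, one $1$ and one other symbol, the paper argues locally. It lists the neighbours of $x=(0,1,a)$ as $(0,b,1)$, $(b,1,0)$ and $(1,0,a)$ with $b\notin\{0,1\}$. It checks that two neighbours of the same form are at distance one and two neighbours of different forms are at distance three. So every neighbourhood is independent, and the result follows by symmetry.

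You instead produce a global invariant: the sign of the type $\pi_w$ is a proper $2$-colouring, so the shell graph is bipartite. The underlying computations are the same. Equal types force distance at most one, and types differing by a $3$-cycle force distance three. In the paper's language, the three neighbour forms are the three transpositions applied to the type of $x$, and two different forms differ by a $3$-cycle.

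Your packaging buys two things. It removes the ``by symmetry'' step, and it gives the stronger conclusion that the graph has no odd cycles at all. The paper's version is a slightly more hands-on enumeration but proves only what is needed. Downstream, in Lemmas~\ref{lem:heavy-neighbor} and~\ref{lem:cross}, only triangle-freeness is used, and your bipartiteness of course implies it.
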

\begin{proof}
Normalize $u=000$ and $v=111$.  Each word in $X(u,v)$
contains exactly one $0$, exactly one $1$, and one symbol different
from both.  For $x=(0,1,a)$ with $a\notin\{0,1\}$, its neighbors
at distance two in the shell have the forms
\[
 (0,b,1),\qquad(b,1,0),\qquad(1,0,a),
 \quad b\notin\{0,1\}.
\]
Two different neighbors of the first form have distance one, as
do two neighbors of the second form; a neighbor from each of two
different forms has distance three.  Thus no two neighbors of
$x$ are adjacent.  By symmetry this holds for every $x\in X(u,v)$,
so the graph is triangle-free.
\end{proof}

\begin{lemma}[Cross-families in a triangle-free graph]\label{lem:cross}
Let $G$ be triangle-free, and let $B_1,\ldots,B_r$ be nonempty
subsets of $V(G)$, where $r\ge1$.  Suppose that for $i\ne j$
every $x\in B_i$ and $y\in B_j$ are equal or adjacent, and that
\[
 |B_i|\le2m\quad\text{for all }i,
 \qquad |B_i|\le m\quad\text{whenever }B_i
 \text{ is independent}.
\]
Then
\begin{equation}\label{eq:cross}
 \sum_{i=1}^r|B_i|\le2m+2r-2.
\end{equation}
\end{lemma}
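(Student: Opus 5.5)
The plan is to classify the possible containment patterns among the $B_i$ using two elementary consequences of the hypotheses, and then bound the sum in each resulting case. The only facts about $G$ used are these: a neighbourhood $N(x)$ in a triangle-free graph is an independent set; a clique has at most $2$ vertices; and there are no three pairwise disjoint nonempty sets that are pairwise completely joined.

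\emph{Step 1: a single set.} If $r=1$, then $|B_1|\le2m$, which is the claimed bound. So assume $r\ge2$.

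\emph{Step 2: properly contained sets.} Suppose $B_i\subsetneq B_j$ for some $i\ne j$, and pick $y\in B_j\setminus B_i$. By the cross condition, $y$ is adjacent to every vertex of $B_i$, so $B_i\subseteq N(y)$ and $B_i$ is independent. On the other hand, any two distinct $x,x'\in B_i$ satisfy $x\in B_i$ and $x'\in B_j$, so they are adjacent. Hence $B_i$ is also a clique, and since it is nonempty, $|B_i|=1$.

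\emph{Step 3: incomparable sets.} Suppose neither of $B_i,B_j$ contains the other. Pick $x\in B_i\setminus B_j$ and $y\in B_j\setminus B_i$. Then $x$ is adjacent to all of $B_j$ and $y$ is adjacent to all of $B_i$; in particular $x\sim y$. Any $z\in B_i\cap B_j$ would lie in $N(x)\cap N(y)$ and form a triangle with $x$ and $y$. Therefore $B_i$ and $B_j$ are disjoint, and every vertex of one is adjacent to every vertex of the other. Moreover each is independent, because $B_i\subseteq N(y)$ and $B_j\subseteq N(x)$. So $|B_i|,|B_j|\le m$.

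\emph{Step 4: repeated sets.} If $B_i=B_j$ with $i\ne j$, the cross condition makes this common set a clique, so it has at most $2$ elements.

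\emph{Step 5: the structure of the maximal sets.} Call $B_i$ maximal if no $B_j$ properly contains it. By Step~3, two distinct maximal sets are disjoint and completely joined. Three distinct maximal sets would then yield a triangle, so there are at most two distinct maximal sets. Every non-maximal $B_i$ is a singleton by Step~2.

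\emph{Step 6: the case count.} There are two cases.
\begin{itemize}
\item \emph{One distinct maximal set $M$, occurring $k$ times.} If $k\ge2$, Step~4 gives $|M|\le2$, so every $B_i$ has size at most $2$ and the sum is at most $2r\le2m+2r-2$. If $k=1$, the sum is at most $|M|+(r-1)\le2m+r-1\le2m+2r-2$.
\item \emph{Two distinct maximal sets $M_1,M_2$.} Each is independent by Step~3, so each has at most $m$ elements. If one of them occurs twice, it is both a clique and independent, hence a singleton. All remaining sets are singletons. The sum is therefore at most $2m+(r-2)\le2m+2r-2$.
\end{itemize}
This proves \eqref{eq:cross}.

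The only step requiring care is Step~3, where triangle-freeness is used twice: once to force disjointness and once to force independence. The rest is bookkeeping over the two cases in Step~6.
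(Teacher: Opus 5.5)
Your proof is correct, but it is organized differently from the paper's. The paper splits on whether some $B_i$ contains an edge $ab$. If one does, triangle-freeness confines every other $B_j$ to $\{a,b\}$, giving at most $2m+2(r-1)$. If none does, every $B_i$ is independent and has size at most $m$. If two of them have size at least three, any adjacent pair $a\in B_i$, $b\in B_j$ confines each remaining set to $\{a,b\}$, and independence makes it a singleton.

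You instead classify pairs by the inclusion order: proper subsets are singletons, incomparable sets are disjoint, completely joined and independent, and repeated sets are cliques. Hence there are at most two distinct maximal sets and everything else is a singleton. Your route needs a bit more bookkeeping, since you must track multiplicities of repeated sets. In return it gives a complete structural description of the family and the sharper bound $\max\{2r,\,2m+r-1\}$. The paper's route is shorter because it only has to find one edge inside a set, or two large sets.

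Your comparison $2r\le 2m+2r-2$ silently uses $m\ge1$. This is automatic: a nonempty independent $B_i$ gives $1\le|B_i|\le m$, and a non-independent one gives $2\le|B_i|\le 2m$. In the paper's context $m\ge4$ anyway.
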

\begin{proof}
If some $B_i$ contains an edge $ab$, every vertex of another $B_j$
must be $a$ or $b$: a different vertex adjacent to both would form
a triangle.  The total is therefore at most $2m+2(r-1)$.

Now assume all $B_i$ are independent.  If at most one has size at
least three, the total is at most $m+2(r-1)$.
Otherwise choose $B_i,B_j$ of sizes at least three.  They have
at most one common vertex; two distinct common vertices would be
nonadjacent within either independent set but required to be
adjacent across the two sets.  Choose distinct $a\in B_i$ and
$b\in B_j$, which must be adjacent.  Any vertex in a third set
is $a$ or $b$ by triangle-freeness; since that third set is
independent, it contains at most one such vertex.  Thus the sum
is at most $2m+(r-2)$, proving \eqref{eq:cross}.
\end{proof}

\begin{lemma}[A heavy three-overlap neighbor]\label{lem:heavy-neighbor}
Suppose $S,T\in\calH$ are distinct, $|S\cap T|=3$, and
$\mu(S)>m$.  Then $\mu(T)\le2m$.
\end{lemma}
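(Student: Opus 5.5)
The plan is to compare the two supports coordinatewise. Write $R=S\cap T$, so $|R|=3$, $S=R\cup\{s\}$ and $T=R\cup\{t\}$ with $s\ne t$. For $x\in C_4$ on $S$ and $y\in C_4$ on $T$, the coordinates $s$ and $t$ contribute exactly $2$ to $d(x,y)$. Hence $d(x|_R,y|_R)\in\{0,2\}$: every projection of an $S$-word onto $R$ is at distance $0$ or $2$ from every projection of a $T$-word.

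First I use the hypothesis $\mu(S)>m$ together with Lemma~\ref{lem:projection}. Restrict the words on $S$ to $S$. They form a code in $\Omega^4$ with all nonzero distances in $\{2,4\}$ and with more than $m$ words. So their projection onto $R$ must contain two words $x,x'$ with $u=x|_R$ and $v=x'|_R$ at distance three. Every $T$-word $y$ satisfies $d(y|_R,u),d(y|_R,v)\in\{0,2\}$. The value $0$ is impossible for either: for example $y|_R=u$ would give $d(y|_R,v)=3$. Therefore every projection $y|_R$ lies in the antipodal shell $X(u,v)$ of Lemma~\ref{lem:shell}.

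Next I control the $T$-words through their symbol at the coordinate $t$. Two distinct $T$-words $y,y'$ with $y_t=y'_t$ satisfy $d(y,y')=d(y|_R,y'|_R)\in\{2,4\}$. Distance $4$ is impossible within three coordinates, so the distance is exactly $2$. (It cannot be $0$, since then $y=y'$.) Thus, for each fixed nonzero symbol $c$, the projections of the $T$-words with $y_t=c$ form a set in $X(u,v)$ in which all pairs are at distance two. These projections are distinct because the restriction map is injective on words with the same $t$-symbol. So they form a clique in the distance-two graph on $X(u,v)$. By Lemma~\ref{lem:shell} that graph is triangle-free, so each such clique has at most two vertices.

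Finally, summing over the $m$ possible nonzero values $c$ of $y_t$ gives $\mu(T)\le2m$. The only delicate point is the step placing all $T$-projections inside the shell, which requires extracting the antipodal pair $u,v$ from $\mu(S)>m$ via Lemma~\ref{lem:projection}. Once that is done, the triangle-free property of the shell does the rest, and Lemma~\ref{lem:cross} is not needed here.
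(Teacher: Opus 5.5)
Your proof is correct and follows the paper's route: you extract an antipodal pair $u,v$ from $\mu(S)>m$ via Lemma~\ref{lem:projection}, place every $T$-projection in the shell $X(u,v)$, and cap each class of $T$-words sharing a symbol at $t$ at two using the triangle-freeness of Lemma~\ref{lem:shell}. The only difference is bookkeeping. The paper groups the $T$-projections into connected components of the induced shell graph and then shows that components carry constant and pairwise distinct leaf symbols. You group by the leaf symbol directly, which gives the same count slightly more economically.
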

\begin{proof}
Let $Q=S\cap T$.  Projection of words supported on $S$ to $Q$
is injective.  By Lemma~\ref{lem:projection}, it contains words
$u,v$ at distance three.  A word supported on $T$ has a
three-coordinate projection $w$ satisfying
$d(w,u),d(w,v)\in\{0,2\}$, because the two distinct leaf coordinates
of $S,T$ contribute two to the full distance.  Neither projected
distance can be zero, since $d(u,v)=3$.  Hence $w\in X(u,v)$.

Projection of words on $T$ to $Q$ is also injective.  Label each
projected word by its nonzero symbol on the coordinate $T\setminus Q$.
On a distance-two edge in the shell the two leaf symbols must agree,
otherwise the full distance between the original words would be
three.  Thus every connected component of the induced shell graph
has a constant leaf symbol.  Two vertices in one component must be
adjacent, since their full distance equals their projected distance
and is in $\{2,4\}$, while the projected distance is at most three.
By Lemma~\ref{lem:shell}, each such component has at most two
vertices.  Components with the same leaf symbol would be joined
by an edge, and therefore have different symbols.  There are at
most $m$ components, proving $\mu(T)\le2m$.
\end{proof}

\begin{lemma}[Common three-core]\label{lem:threecore}
Let $Q\subseteq[n]$ have size three, and let $\mathcal T\subseteq\calH$
consist of $s\ge1$ distinct four-supports containing $Q$.  Then
\begin{equation}\label{eq:threecore}
 \sum_{S\in\mathcal T}\mu(S)
 \le\max\{ms,\ m^2+2m+2s-4\}.
\end{equation}
\end{lemma}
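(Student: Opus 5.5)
We split according to whether some member of $\mathcal T$ is heavy, meaning $\mu(S)>m$. If no member of $\mathcal T$ is heavy, each of the $s$ supports contributes at most $m$, and the sum is at most $ms$. That is the first term of the maximum, and this case is done.

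Now suppose some $S_0\in\mathcal T$ has $\mu(S_0)>m$. We have $\mu(S_0)\le m^2$ by Lemma~\ref{lem:local}. Any other $T\in\mathcal T$ satisfies $|S_0\cap T|=3$, so Lemma~\ref{lem:heavy-neighbor} gives $\mu(T)\le 2m$. The proof of that lemma also gives more. By Lemma~\ref{lem:projection}, the projection of the words on $S_0$ to $Q$ contains a pair $u,v$ at distance three. The projection of the words on each $T\ne S_0$ then lies injectively in the shell $X(u,v)$. Let $G$ be the distance-two graph on $X(u,v)$; it is triangle-free by Lemma~\ref{lem:shell}. Let $B_T\subseteq V(G)$ be the projected image of the words on $T$, so $|B_T|=\mu(T)$.

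We next check the hypotheses of Lemma~\ref{lem:cross} for the sets $B_T$, $T\ne S_0$, with $r=s-1$.

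\emph{Cross condition.} Take distinct $T,T'$ with words $x,y$. Their leaf coordinates $T\setminus Q$ and $T'\setminus Q$ are distinct and contribute exactly two to $d(x,y)$. Hence the projected distance on $Q$ lies in $\{0,2\}$, so the projections are equal or adjacent in $G$.

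\emph{Size bounds.} We have $|B_T|\le 2m$ from Lemma~\ref{lem:heavy-neighbor}. If $B_T$ is independent in $G$, then no two words on $T$ have projected distance two. Their projected distance is also never $0$, by injectivity. A projected distance of three is impossible for two words on $T$: they would share the leaf symbol and have full distance three, or have distinct leaf symbols and full distance four. In the latter case, however, both words lie in $X(u,v)$, and one must rule out that two shell words are at distance three. In the shell normalized as in Lemma~\ref{lem:shell}, such pairs do exist, e.g.\ $(0,1,a)$ and $(1,a',0)$. This is the main obstacle, so we handle it instead by applying Lemma~\ref{lem:projection} directly. The leaf-symbol argument in the proof of Lemma~\ref{lem:heavy-neighbor} shows that components of $G[B_T]$ have distinct leaf symbols, so an independent $B_T$ has pairwise distinct leaf symbols and $|B_T|\le m$. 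This is exactly the count given by the ``at most $m$ components'' argument, with each component a singleton.

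Lemma~\ref{lem:cross} now gives
\[
\sum_{T\ne S_0}\mu(T)\le 2m+2(s-1)-2=2m+2s-4,
\]
and adding $\mu(S_0)\le m^2$ yields the second term $m^2+2m+2s-4$. When $s=1$ there are no other supports, and the bound is simply $\mu(S_0)\le m^2\le m^2+2m-2$.

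The delicate point is verifying that the independent-set bound $|B_T|\le m$ follows from the component counting rather than from Lemma~\ref{lem:projection}. It holds because distinct components of $G[B_T]$ must carry distinct leaf symbols, and there are only $m$ nonzero symbols.
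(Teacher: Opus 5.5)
Your proof is correct and follows the paper's argument essentially step for step. You choose a heavy $S_0$, place the projections $B_T$ of the other supports in the triangle-free shell $X(u,v)$, obtain $|B_T|\le 2m$ in general and $|B_T|\le m$ for independent $B_T$ from the distinct-leaf-symbol component count, and then apply Lemma~\ref{lem:cross} with $r=s-1$ together with $\mu(S_0)\le m^2$. Your detour about excluding projected distance three within a single $T$ is unnecessary, and you rightly abandon it, since the component argument alone yields the independent-case bound.
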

\begin{proof}
If $s=1$, use $\mu(S)\le m^2$ from Lemma~\ref{lem:local}.
Suppose $s\ge2$.  Each support in $\mathcal T$ is $Q\cup\{v\}$
for a different leaf $v$.  If every support has multiplicity at
most $m$, then the first term gives the result.
Otherwise choose $S_0\in\mathcal T$ with $\mu(S_0)>m$.
By Lemma~\ref{lem:projection}, the words on $S_0$ project to a pair
$u,v\in\Omega^Q$ at distance three.  For another
support $T\in\mathcal T$, let $B_T$ be the set of restrictions to $Q$
of words supported on $T$.  As in Lemma~\ref{lem:heavy-neighbor},
$B_T\subseteq X(u,v)$, and the graph induced by $B_T$ in the
shell has components of size at most two, with different nonzero
leaf symbols on different components.  In particular,
\[
 |B_T|\le2m,\quad\text{and if }B_T\text{ is independent,}
 \quad |B_T|\le m.
\]
For different supports $T,U\ne S_0$, their leaf coordinates
contribute distance two to any pair of codewords.  Hence the
projected words from $B_T$ and $B_U$ are either equal or at
distance two.  Applying Lemma~\ref{lem:cross} to the $s-1$
sets $B_T$ gives
\[
 \sum_{T\in\mathcal T\setminus\{S_0\}}\mu(T)
 \le2m+2(s-1)-2=2m+2s-4.
\]
Together with $\mu(S_0)\le m^2$, this proves \eqref{eq:threecore}.
\end{proof}

\begin{lemma}[Small three-overlap families]\label{lem:three-small}
Suppose a family $\mathcal T\subseteq\calH$ has at most three
supports, with every two distinct members intersecting in three
coordinates.  Then
\begin{equation}\label{eq:three-small}
 \sum_{S\in\mathcal T}\mu(S)\le m^2+4m.
\end{equation}
If all members of a family $\mathcal T\subseteq\calH$ are
four-subsets of one fixed five-set, then
\begin{equation}\label{eq:five-small}
 \sum_{S\in\mathcal T}\mu(S)\le m^2+8m.
\end{equation}
\end{lemma}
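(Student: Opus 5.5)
The argument rests on Lemma~\ref{lem:heavy-neighbor}: once one support is \emph{heavy}, meaning its multiplicity exceeds $m$, every support meeting it in three coordinates has multiplicity at most $2m$. Lemma~\ref{lem:local} supplies $\mu(S)\le m^2$ for the heavy support itself. We split each part of the statement according to whether a heavy member exists.

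For \eqref{eq:three-small}, let $\mathcal T$ have $s\le3$ members, any two meeting in exactly three coordinates.
\begin{enumerate}
\item Suppose every $S\in\mathcal T$ has $\mu(S)\le m$. Then the sum is at most $3m\le m^2+4m$.
\item Otherwise fix $S_0\in\mathcal T$ with $\mu(S_0)>m$. Each of the at most two other members $T$ satisfies $|S_0\cap T|=3$, so Lemma~\ref{lem:heavy-neighbor} gives $\mu(T)\le2m$. Combining this with $\mu(S_0)\le m^2$ gives
\[
\sum_{S\in\mathcal T}\mu(S)\le m^2+2\cdot2m=m^2+4m .
\]
\end{enumerate}
Case 2 uses the hypothesis only for pairs involving $S_0$. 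It does not matter whether a second member is also heavy, since Lemma~\ref{lem:heavy-neighbor} bounds it by $2m$ anyway.

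For \eqref{eq:five-small}, let $F$ be the fixed five-set. Two distinct four-subsets of $F$ have union $F$, so they meet in exactly $4+4-5=3$ coordinates. Also, $F$ has only five four-subsets, so $|\mathcal T|\le5$. The same dichotomy applies.
\begin{enumerate}
\item If no member is heavy, the sum is at most $5m\le m^2+8m$.
\item If some $S_0$ is heavy, each of the at most four other members has multiplicity at most $2m$ by Lemma~\ref{lem:heavy-neighbor}. Together with Lemma~\ref{lem:local} this gives $m^2+4\cdot2m=m^2+8m$.
\end{enumerate}

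I do not expect a genuine obstacle here, because the substantive work is contained in Lemma~\ref{lem:heavy-neighbor}. The only points to check are:
\begin{enumerate}
\item Distinct four-subsets of a five-set meet in exactly three coordinates, which the count above confirms.
\item The unheavy case is dominated, since $3m$ and $5m$ are below the stated bounds for every $m\ge1$.
\end{enumerate}
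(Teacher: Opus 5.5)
Your proposal is correct and follows essentially the same approach as the paper. The paper also splits on whether some support has multiplicity exceeding $m$, then bounds the heavy support by $m^2$ via Lemma~\ref{lem:local} and each of the at most two (respectively four) other supports by $2m$ via Lemma~\ref{lem:heavy-neighbor}.
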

\begin{proof}
For the first statement, if all multiplicities are at most $m$,
the total is at most $3m$.  Otherwise choose a support $S_0$ of
multiplicity greater than $m$.  Lemma~\ref{lem:heavy-neighbor}
bounds each of the at most two other multiplicities by $2m$, while
$\mu(S_0)\le m^2$.

For the second statement, there are at most five different
four-subsets of the fixed five-set, and every two different ones
intersect in three coordinates.  If all multiplicities are at
most $m$, the total is at most $5m$.  Otherwise use a support
of multiplicity greater than $m$ and
Lemma~\ref{lem:heavy-neighbor} on each of the at most four others.
\end{proof}

\subsection{A joint bound for the two layers}

A family of four-supports with a common three-set has a potentially
large weight-four layer.  The next lemma controls it together with the
weight-two layer, rather than bounding the two layers independently.

\begin{lemma}[Joint estimate in the common-three-core case]
\label{lem:joint}
Suppose the support family $\calH$ consists of $s\ge3$ distinct
supports, all of the form $Q\cup\{v\}$ for a fixed three-set $Q$,
and suppose
\begin{equation}\label{eq:linear-three-assumption}
 |C_4|\le ms.
\end{equation}
Then
\begin{equation}\label{eq:joint}
 1+|C_2|+|C_4|\le qn.
\end{equation}
\end{lemma}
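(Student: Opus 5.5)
The plan is to bound the weight-two layer using the rigid shape of $\calH$, and to set that bound against $|C_4|\le ms$ through a charging argument over the coordinates outside $Q$. Write $L$ for the set of leaves, so that $\calH=\{Q\cup\{v\}:v\in L\}$ and $|L|=s\ge3$.

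\emph{Step 1: every two-support meets $Q$.} A word of $C_2$ whose support misses a four-support is at distance $6$ from the words on it. So every occurring two-support meets every $Q\cup\{v\}$. Since $s\ge3$, a two-set disjoint from $Q$ cannot meet all $s$ leaves, so it must meet $Q$. In particular there are at most three isolated two-supports, since they are pairwise disjoint and each meets $Q$. By Lemma~\ref{lem:pure}, every active support carries a single word and every isolated one carries at most $m$ words.

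\emph{Step 2: synchronization between the layers.} Let $x\in C_2$ have support $\{i,j\}$ with $i\in Q$ and $j\notin Q$, and let $v\in L$ with $v\ne j$. Then $\{i,j\}\cap(Q\cup\{v\})=\{i\}$, so Lemma~\ref{lem:single} forces $y_i=x_i$ for every $y\in C_4$ on $Q\cup\{v\}$. Let $R\subseteq Q$ be the set of coordinates $i$ that lie in such a ``crossing'' two-support. For $i\in R$, every word on every four-support, except possibly the one with leaf $j$, has a prescribed symbol at $i$. Because $s\ge3$, there are at least two supports on which this symbol is forced, and I will use those to propagate the constraint across supports. I then split by $k=|R|$.
\begin{enumerate}
\item If $k=0$, the two-supports all lie inside $Q$, so $|C_2|\le 3+3(m-1)$. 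The bound $1+|C_2|+ms\le 1+3m+m(n-3)\le qn$ is immediate.
\item If $k\ge1$, compare two words on different supports $Q\cup\{v\}$ and $Q\cup\{w\}$. Their distance is $2+\delta$, where $\delta$ counts the disagreements on $Q\setminus R$, and $\delta$ must be $0$ or $2$. For $k=2$ this forces agreement on the last coordinate of $Q$ across supports. Each support then has all of $Q$ fixed, hence multiplicity one, so $|C_4|\le s$ on all but at most one exceptional support. Similarly, $k=3$ gives $|C_4|\le s+O(m)$.
\end{enumerate}
In these cases $|C_2|\le 3(n-3)+3+3(m-1)$, while $|C_4|$ is at most $s$ plus a bounded number of exceptional supports. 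Together with $s\le n-3$ and $q\ge5$, this gives $1+|C_2|+|C_4|\le 4n+O(m)\le qn$, provided $n$ is moderately large. If $n$ is small, I fall back on $\mu\le m^2$ for the exceptional support; the stated inequality only needs a linear-in-$n$ slack.

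\emph{Step 3: the case $k=1$.} This is the main obstacle. Here $R=\{i\}$, and the crossing two-supports are $\{i,j\}$ for $j\notin Q$, up to $n-3$ of them. Meanwhile the four-words may still have multiplicity up to $m$ each, with the symbol at $i$ fixed. The naive total $(n-3)+ms$ reaches $(m+1)n=qn$ only up to lower-order terms, so the argument has to be done coordinatewise. I will charge to each $j\notin Q$ the two-words $\{i,j\}$ together with the four-words on $Q\cup\{j\}$ when $j\in L$, and show that each coordinate receives at most $q$ words. That is, a leaf $j$ that also carries a crossing support $\{i,j\}$ must have $\mu(Q\cup\{j\})\le m$, with the two-word accounting for the extra $1$. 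The remaining two-supports inside $Q$, the isolated extras, and the zero word have to be absorbed by the unused charge $3q$ of the coordinates in $Q$.

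The delicate point is to verify the per-leaf bound. A two-word on $\{i,j\}$ meets $Q\cup\{j\}$ in two coordinates, so its symbols on $\{i,j\}$ constrain the $m$ words on that support; I will check that these constraints leave at most $m$ words there, rather than $m+1$. I also need to handle the case where the hypothesis $|C_4|\le ms$ is tight only on average. For that I will use the hypothesis globally, as $\sum_{v\in L}\mu(Q\cup\{v\})\le ms$, rather than leaf by leaf, so the argument must allow per-leaf excesses to be compensated elsewhere in the sum.
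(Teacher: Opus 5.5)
Your proposal does not yet prove the lemma, and the gap is not where you place it.

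\textbf{Step 3 is immediate.} For $k=1$ there are at most $n-3$ crossing supports $\{i,j\}$, each of multiplicity one, plus at most $3m$ words on two-subsets of $Q$. The hypothesis gives $|C_4|\le ms\le m(n-3)$. Hence
\[
 1+|C_2|+|C_4|\le 1+(n-3)+3m+m(n-3)=qn-2 .
\]
No per-leaf bound or charging scheme is needed. Your worry about per-leaf versus global use of $|C_4|\le ms$ disappears once you sum globally.

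\textbf{The real gap is $k\ge2$.} Here the hypothesis is too weak against your bound on $C_2$: already for $k=2$,
\[
 1+2(n-3)+3m+m(n-3)=(m+2)n-5>qn .
\]
So you genuinely need $|C_4|$ close to $s$, and your derivation of this fails. Each $i\in R$ that has only one crossing support $\{i,j_i\}$ creates its own exceptional leaf $j_i$. So for $k=2$ there can be two exceptional supports, not at most one. When $s=3$, only one non-exceptional support may then remain. Your comparison ``distance $2+\delta$ with $\delta$ counted on $Q\setminus R$'' presupposes agreement on $R$, which fails whenever an exceptional support is involved. The closing step ``$4n+O(m)\le qn$ for moderately large $n$, otherwise fall back on $\mu\le m^2$'' is not an argument: the $O(m)$ is never established, and the $m^2$ fallback makes the bound worse.

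\textbf{The missing idea.} Classify a core coordinate by whether it carries \emph{at least two} crossing supports, not at least one.
\begin{itemize}
\item If $\{i,j\}$ and $\{i,j'\}$ both occur with $j\ne j'$, Lemma~\ref{lem:single} forces them to share the symbol at $i$. Applying it to whichever of the two avoids a given leaf shows this symbol is constant on all of $C_4$, with no exceptional leaf.
\item A coordinate with at most one crossing support contributes at most one word to $C_2$.
\end{itemize}
Let $t$ be the number of coordinates of the first kind.
\begin{itemize}
\item If $t\le1$, then $|C_2|\le(n-3)+2+3m$, and the hypothesis alone gives $1+|C_2|+|C_4|\le(m+1)(n-3)+3m+3=qn$.
\item If $t\ge2$, two core coordinates are constant on $C_4$. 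Comparing with a word on a second support makes the third core coordinate constant on each support. So every support has multiplicity one, $|C_4|=s\le n-3$, and $1+|C_2|+|C_4|\le4n+3m-11\le qn$.
\end{itemize}

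\textbf{Comparison with the paper.} This repaired version is dual to the paper's proof. The paper splits by the number $r$ of \emph{variable} core coordinates and notes that a variable coordinate admits at most one crossing support. It then refines $|C_4|$ according to $r$: to $s$, to $m+s-1$, to $ms$ via Lemma~\ref{lem:distance2-3}, or to the hypothesis. Your split, once corrected, is slightly shorter, since it needs neither the $r=1$ disjointness argument nor Lemma~\ref{lem:distance2-3}.
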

\begin{proof}
Let $L\subseteq[n]\setminus Q$ be the set of $s$ occurring leaves.
For $v\in L$, write
\[
 B_v=\{x|_Q:\ x\in C_4,\ \supp(x)=Q\cup\{v\}\}.
\]
Projection onto $Q$ is injective on each fixed support.  Call a
coordinate $i\in Q$ \emph{variable} if some $B_v$ contains two
words having different symbols at $i$, and let $r$ be the number
of variable coordinates, $0\le r\le3$.

Every two-support occurring in $C_2$ must meet $Q$.  Indeed, a
two-set avoiding $Q$ could meet at most two of the $s\ge3$
four-supports and would be disjoint from a third, giving a
forbidden distance six.

Consider weight-two supports $\{i,u\}$ with $i\in Q$ and
$u\notin Q$.  By Lemma~\ref{lem:single} and a leaf $v\ne u$,
each such support has multiplicity one.  If $i$ is variable,
choose a leaf $v_i$ on which the symbol at $i$ varies.  No
support $\{i,u\}$ with $u\ne v_i$ can occur, because it would
meet $Q\cup\{v_i\}$ only at $i$ and have to agree there with
two different codewords.  Thus at most one such support occurs
for each variable coordinate.  There are at most $n-3$ supports
of this kind for a nonvariable coordinate.  The remaining
two-supports lie entirely inside $Q$; there are three of them,
each of multiplicity at most $m$.  Consequently
\begin{equation}\label{eq:C2-r}
 |C_2|\le(3-r)(n-3)+r+3m.
\end{equation}

We next use $r$ to refine the weight-four estimate.

If $r=0$, all $B_v$ are singletons, so $|C_4|=s$.
If $r=1$, relabel the variable coordinate as $1$.  For each
leaf $v$, the two other core symbols are fixed throughout $B_v$;
call this ordered pair $c_v$, and let $A_v\subseteq\Omega$ be the
set of values at coordinate $1$.  Then $|A_v|=|B_v|$.  For two
leaves $v,w$, let $t=d(c_v,c_w)\in\{0,1,2\}$.  Since the two
leaf coordinates contribute two to the full distance, compatibility
of all word pairs is equivalent to
\[
 t+\mathbf1_{\{a\ne b\}}\in\{0,2\}
 \qquad(a\in A_v,\ b\in A_w).
\]
When $t=0$ or $2$, both sets must be the same singleton; when
$t=1$, they are disjoint.  Thus the sets belonging to leaves of
multiplicity at least two are pairwise disjoint.  Their total
size is at most $m$, and every remaining leaf contributes one.
As $r=1$ guarantees at least one nonsingleton leaf,
\begin{equation}\label{eq:r1}
 |C_4|\le m+s-1.
\end{equation}
If $r=2$, the third coordinate of $Q$ is constant on each
individual $B_v$.  For each leaf, delete that fixed coordinate
from its four-support; one obtains a length-three code with
mutual distance exactly two.  Lemma~\ref{lem:distance2-3}
then gives $|B_v|\le m$, so $|C_4|\le ms$.
For $r=3$ we use the assumption \eqref{eq:linear-three-assumption}.

Since $s\le n-3$, the four cases combine with \eqref{eq:C2-r}
as follows:
\[
\begin{array}{c|c|c|c}
 r&|C_2|\text{ at most}&|C_4|\text{ at most}&
     1+|C_2|+|C_4|\text{ at most}\\ \hline
 0&3(n-3)+3m&s&4n+3m-11\\
 1&2(n-3)+1+3m&m+s-1&3n+4m-8\\
 2&(n-3)+2+3m&ms&qn\\
 3&3+3m&ms&mn+4
\end{array}
\]
The first two last-column expressions are at most $qn=(m+1)n$
when $m\ge4$ and $n\ge10$.  In the last row,
$mn+4\le(m+1)n$ because $n\ge4$.
This proves \eqref{eq:joint}.
\end{proof}

\section{Structural alternatives for four-support families}\label{sec:structure}

We need two standard facts about intersections and matchings.
The \emph{matching number} $\nu(G)$ of a graph is the largest
number of pairwise disjoint edges.

\begin{lemma}[Three-intersecting four-sets]\label{lem:3int4}
If $\mathcal F\subseteq\binom{[n]}4$ has
$|S\cap T|\ge3$ for all $S,T\in\mathcal F$, then either
all members contain one fixed three-set, or every member is
contained in one fixed five-set.
\end{lemma}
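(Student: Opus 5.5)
The plan is a short case analysis driven by the union of three members. If $|\mathcal F|\le1$ the first alternative holds trivially: a single four-set contains a three-set, and the empty family satisfies both alternatives vacuously. So assume $\mathcal F$ has two distinct members $S,T$. Since both have size four and $|S\cap T|\ge3$, we get $|S\cap T|=3$. Put $Q=S\cap T$ and $F=S\cup T$, so $|F|=5$ and $F=Q\cup\{a,b\}$ with $S=Q\cup\{a\}$ and $T=Q\cup\{b\}$. The aim is to show that every further member $U\in\mathcal F$ either contains $Q$ or lies in $F$, and then that the two kinds cannot be mixed unless one alternative already covers everything.

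First take an arbitrary $U\in\mathcal F$. From $|U\cap S|\ge3$ and $|U\cap T|\ge3$ we get $|U\cap F|\ge3$. If $U\not\subseteq F$, then $U$ has exactly one element outside $F$, so $|U\cap F|=3$. Suppose $U\cap F\ne Q$. Then $U\cap F$ misses some $c\in Q$ and contains at least one of $a,b$, say $a$. In that case $U\cap T\subseteq (Q\setminus\{c\})\cup\{b\}$ would have to equal this whole three-set, forcing $b\in U$. But then $U\cap F=(Q\setminus\{c\})\cup\{a,b\}$ has size four, a contradiction. Hence every $U\not\subseteq F$ satisfies $U\cap F=Q$, that is, $U=Q\cup\{x\}$ with $x\notin F$.

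The remaining step, and the only one requiring care, is the mixing case. Suppose some $U=Q\cup\{x\}$ with $x\notin F$ exists, together with a member $W\subseteq F$ not containing $Q$. Such a $W$ must be $F\setminus\{c\}$ for some $c\in Q$. Then $W\cap U=Q\setminus\{c\}$ has size two, contradicting $3$-intersection. So two situations remain. If some member lies outside $F$, every member contains $Q$: members outside $F$ do so by the previous step, and members inside $F$ do so by the contradiction just derived. Otherwise every member is contained in the five-set $F$. This gives the dichotomy. I expect no real obstacle; the only point needing care is the small counting argument showing $U\cap F=Q$ in the second step.
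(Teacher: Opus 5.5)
Your proof is correct and follows essentially the same route as the paper. The paper fixes $S=P\cup\{a\}$, $T=P\cup\{b\}$, shows that a member omitting a point $p\in P$ must equal $(P\setminus\{p\})\cup\{a,b\}$, and then uses the same two-element intersection you exhibit to force every member containing $P$ into $P\cup\{a,b\}$; you merely split on whether some member leaves $F$ rather than on whether some member omits $Q$, and your second step could be shortened, since once $|U\cap F|=3$ the inclusions $U\cap S,\,U\cap T\subseteq U\cap F$ force $U\cap F=U\cap S\cap T\subseteq Q$.
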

\begin{proof}
The assertion is immediate for at most one member.  Otherwise
choose distinct $S=P\cup\{a\}$ and $T=P\cup\{b\}$, with
$|P|=3$ and $a\ne b$.  If every member contains $P$, we are done.
Otherwise choose $U$ not containing $P$.  Its intersections with
$S,T$ force
$U=(P\setminus\{p\})\cup\{a,b\}$ for some $p\in P$.
For any other $V$, if $P\subseteq V$, write $V=P\cup\{c\}$:
$|V\cap U|\ge3$ forces $c\in\{a,b\}$.  If $P\nsubseteq V$,
intersections with $S,T$ force
$V=(P\setminus\{p'\})\cup\{a,b\}$ for some $p'\in P$.
Thus every member lies in $P\cup\{a,b\}$.
\end{proof}

We use the following specialization of the complete nontrivial-intersection
theorem of Ahlswede and Khachatrian~\cite{AK96}: for $n\ge10$, a
$2$-intersecting family of four-subsets of $[n]$ with no common
two-set has size at most
\begin{equation}\label{eq:AK}
 4n-15.
\end{equation}
We also use the Erd\H{o}s--Gallai matching theorem~\cite{EG59}
in the form: a simple graph on $N\ge7$ vertices without a matching
of size three has at most $2N-3$ edges.

\begin{lemma}[No common two-core]\label{lem:no-core}
Suppose $n\ge10$, and the $2$-intersecting support family $\calH$
has no common two-set.  Then
\begin{equation}\label{eq:no-core}
 |C_4|\le m^2+4n+10m-21.
\end{equation}
\end{lemma}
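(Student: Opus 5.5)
We start by writing
\[
|C_4|=\sum_{S\in\calH}\mu(S)=|\calH|+\sum_{S\in\calH}\bigl(\mu(S)-1\bigr).
\]
The first term is at most $4n-15$ by the Ahlswede--Khachatrian bound \eqref{eq:AK}, using $n\ge10$. So it suffices to show that the total excess $\sum_S(\mu(S)-1)$ is at most $m^2+10m-6$. To organise this, we sort the supports by multiplicity. By Lemma~\ref{lem:2int}, any two members of $\calH$ meet in two or three coordinates. Call $S$ \emph{heavy} if $\mu(S)>m$ and \emph{medium} if $2\le\mu(S)\le m$.

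By Lemma~\ref{lem:overlap}, a heavy support has no exact $2$-overlap with any other member, so it meets every other support in exactly three coordinates. A medium support $S$ has all its exact $2$-overlaps on a single pair $P_S\subseteq S$, since otherwise $\mu(S)=1$. On $P_S$ the symbols of every word supported on $S$ are pinned.

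The first step handles a heavy support $S_0$. Every $T\in\calH$ satisfies $|T\cap S_0|=3$, so Lemma~\ref{lem:heavy-neighbor} gives $\mu(T)\le2m$. Two heavy supports meet in three coordinates. Let $\mathcal F$ be the subfamily of supports whose pairwise intersections are all of size three. By Lemma~\ref{lem:3int4}, $\mathcal F$ either has a common three-core or lies in a five-set. In the five-set case, Lemma~\ref{lem:three-small}, via \eqref{eq:five-small}, bounds the weight of $\mathcal F$ by $m^2+8m$. In the three-core case, we use Lemma~\ref{lem:threecore}, together with the remark that the family has no common two-set. That remark forces some member $W$ of $\calH$ to miss part of the core $Q$. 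Then $W$ meets $Q$ in at most two points, and $W$ must meet each $Q\cup\{v\}$ in at least three points. This pins every leaf $v$ into $W$, so at most two leaves occur. With $s\le3$, inequality \eqref{eq:three-small} applies. Either way, the heavy part together with its three-intersecting companions should contribute excess at most $m^2+8m$ or so.

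The second step, which we expect to be the main obstacle, is bounding the number of medium supports outside this heavy cluster. The plan is to show that there are at most a bounded number of them, about two or three, each contributing excess at most $m-1$. We would consider the pinned pairs $P_S$ and note that two medium supports meeting exactly in two coordinates must meet exactly in $P_S=P_T$. Since $\calH$ has no common two-set, for each medium $S$ some member $W$ avoids containing $P_S$. That $W$ must meet $S$ in three coordinates containing at most one point of $P_S$, which is impossible since $|S\setminus P_S|=2$. Hence $W$ contains exactly one point of $P_S$ and both points of $S\setminus P_S$.

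We would then encode the medium supports, through these witness sets, as edges of an auxiliary graph on $[n]$, with one edge for each pair $S\setminus P_S$. Three disjoint such edges would yield supports violating the $2$-intersecting property. Therefore the graph has no matching of size three. The Erdős--Gallai bound stated above then gives at most $2N-3$ edges on the at most $N\le7$ vertices that are relevant, so only a constant number of medium supports can occur. If this constant comes out slightly too large, we would fall back on the sharper structural description of $2$-intersecting families without a common two-core.

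Finally, we add the contributions and check that the constants fit:
\[
(4n-15)+(m^2-1)+(\text{bounded number})\cdot(m-1)\le m^2+4n+10m-21.
\]
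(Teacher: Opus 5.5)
Your Step~1 is essentially sound if $\mathcal F$ is read as the set of supports that meet every other member of $\calH$ in exactly three points. The witness argument needs $W$ to meet each member of $\mathcal F$ in three points, and this fails for an arbitrary pairwise $3$-intersecting subfamily. You also need to treat $\mathcal F$ whether or not a heavy support exists, since a medium support with no exact $2$-overlap has no pair $P_S$ and is otherwise unaccounted for.

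The genuine gap is Step~2. The assertion that three pairwise disjoint edges $S\setminus P_S$ contradict $2$-intersection is false, even in a code satisfying all the hypotheses. Here is a counterexample, padded with zeros to any length $n\ge10$:
\begin{itemize}
\item Take $Q=\{1,2,3\}$, $S_i=Q\cup\{3+i\}$ and $T_i=(Q\setminus\{i\})\cup(\{4,5,6\}\setminus\{3+i\})$ for $i=1,2,3$.
\item On $S_i$ put, for every symbol $a\notin\{0,1\}$, the word equal to $a$ at coordinates $i$ and $3+i$ and equal to $1$ on $Q\setminus\{i\}$.
\item On each $T_i$ put the word equal to $1$ on all of $T_i$.
\end{itemize}
Together with $0$ this is a $\{2,4\}$-code. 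Its supports are $2$-intersecting, and $T_1\cap T_2\cap T_3=\varnothing$, so there is no common two-set. No support is heavy. Each $S_i$ is medium, with multiplicity $m-1$ and unique exact-overlap pair $P_{S_i}=Q\setminus\{i\}$ coming from $T_i$. The edges $S_i\setminus P_{S_i}=\{i,3+i\}$ are pairwise disjoint.

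Even granting the no-matching claim, a graph without a matching of size three can have unboundedly many edges (a star). Erd\H{o}s--Gallai gives $2N-3$, which grows with the number $N$ of vertices, and nothing in your argument confines the edges to seven vertices. You also do not show that distinct medium supports give distinct edges. So no constant bound on the number of medium supports is established.

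The final accounting is not carried out either. The closing display charges only $m^2-1$ for the heavy cluster, although Step~1 gives ``about $m^2+8m$''. With that figure, even two medium supports of excess $m-1$ overshoot the available excess $m^2+10m-6$. Cluster size and the number of medium supports would have to be traded off case by case, and you do not do this.

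What is missing is a device that confines the supports of multiplicity greater than one to an explicit bounded list. The paper gets it by anchoring on one exact-overlap pair $S=P\cup A$, $T=P\cup B$ and a witness $R\not\supseteq P$:
\begin{itemize}
\item A support not containing $P$ with $\mu>1$ must be $B\cup\{p,a\}$ or $A\cup\{p,b\}$, and a $2\times2$ grid argument leaves at most four of them.
\item A support $P\cup e$ must have $e$ meeting $R$. When $|R\cap P|=1$, those with $\mu>1$ either are among two mutually meeting cross edges between $A$ and $B$, or have $e\subseteq R\setminus P$, where \eqref{eq:three-small} applies.
\item When $R\cap P=\varnothing$, every such $e$ lies in $R$, giving six supports of multiplicity at most $m$.
\end{itemize}
This yields exactly $4(m-1)+2(m-1)+(m^2+4m)$, or $10(m-1)$ when $R\cap P=\varnothing$, on top of $|\calH|\le4n-15$.
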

\begin{proof}
If every two distinct members of $\calH$ meet in at least three
positions, Lemma~\ref{lem:3int4} and the absence of a common
two-set show that all members are four-subsets of one five-set.
Lemma~\ref{lem:three-small} then gives
$|C_4|\le m^2+8m$, smaller than \eqref{eq:no-core}.

Otherwise choose $S,T\in\calH$ with $P=S\cap T$ of size two, and
write
\begin{equation}\label{eq:SAT}
 S=P\mathbin{\dot\cup}A,\qquad
 T=P\mathbin{\dot\cup}B,
 \qquad |P|=|A|=|B|=2,
\end{equation}
where $P,A,B$ are pairwise disjoint.  Since $P$ is not common,
fix $R\in\calH$ with $P\nsubseteq R$.  Partition $\calH$
into $\calH_0$ (supports not containing $P$) and $\calH_1$
(supports containing $P$).  For a subfamily $\mathcal F$ define
\[
 \operatorname{exc}(\mathcal F)=
 \sum_{U\in\mathcal F}(\mu(U)-1).
\]

We first claim
\begin{equation}\label{eq:H0-exc}
 \operatorname{exc}(\calH_0)\le4(m-1).
\end{equation}
Let $U\in\calH_0$ have $\mu(U)>1$.  Then
$|U\cap S|,|U\cap T|\ge2$ and
\[
 |U\cap S|+|U\cap T|
 =|U\cap(S\cup T)|+|U\cap P|\le5.
\]
If both intersections had size two, they would be different
pairs (since $P\nsubseteq U$), contradicting the second assertion
of Lemma~\ref{lem:overlap}.  Thus the sizes are $(2,3)$ or
$(3,2)$, and $U$ is of one of the forms
\begin{equation}\label{eq:types}
 U=B\cup\{p,a\},\quad p\in P,\ a\in A;
 \qquad\text{or}\qquad
 U=A\cup\{p,b\},\quad p\in P,\ b\in B.
\end{equation}
Among the four supports of the first form, at most two can have
multiplicity greater than one.  In fact, if two of them use
different elements of $P$ and different elements of $A$, they
intersect exactly in $B$; for either support this would give a
second exact-overlap pair distinct from its intersection with $S$.
Thus two opposite cells of the $2\times2$ choice grid cannot both
have multiplicity greater than one; any set of three cells contains
such a pair.  The second form obeys the same argument.
Altogether at most four supports in $\calH_0$ can have
multiplicity greater than one, each of multiplicity at most $m$
by Lemma~\ref{lem:overlap}.  This proves \eqref{eq:H0-exc}.

Represent $\calH_1$ by a graph $G$ on $[n]\setminus P$:
$e\in E(G)$ when $P\cup e\in\calH_1$.  In particular, $A,B\in E(G)$
and $A\cap B=\varnothing$.  We shall use this fixed pair of
disjoint edges rather than subdividing into matching-free cases.
Let $r=|R\cap P|\in\{0,1\}$.

If $r=0$, every $P\cup e\in\calH_1$ must meet $R$ in two
coordinates, so $e\subseteq R$.  Hence there are at most six
such supports, all of multiplicity at most $m$ by
Lemma~\ref{lem:overlap}.  Using \eqref{eq:AK} and
\eqref{eq:H0-exc},
\[
 |C_4|\le|\calH|+4(m-1)+6(m-1)
 \le4n+10m-25\le m^2+4n+10m-21.
\]

If $r=1$, write $R\cap P=\{p\}$ and $D=R\setminus P$, with
$|D|=3$.  Every edge of $G$ meets $D$.  Call an edge
\emph{exceptional} if it lies wholly in $D$, and
\emph{ordinary} otherwise.  There are at most three exceptional
edges.  An ordinary edge corresponds to a support whose exact
intersection with $R$ has size two; its multiplicity is therefore
at most $m$.

An ordinary edge $e$ of multiplicity greater than one must meet
every other edge of $G$.  Otherwise a disjoint edge $f$ would
produce the exact-overlap pair $P$ on the support $P\cup e$,
distinct from its exact-overlap pair with $R$; then
Lemma~\ref{lem:overlap} would force multiplicity one.
In particular, such an edge must meet both disjoint edges $A,B$.
Thus it is one of the four cross-edges between $A$ and $B$.
Any two edges that meet every edge of $G$ must meet each other.
Since a pairwise-intersecting subfamily of the four edges of
$K_{2,2}$ has size at most two, the ordinary excess is at most
$2(m-1)$.

The exceptional edges correspond to at most three four-supports,
any two of which intersect in three coordinates (they all contain
$P$ and the exceptional edges are two-subsets of $D$).
Lemma~\ref{lem:three-small} bounds the \emph{total multiplicity}
on these supports by $m^2+4m$.  Combining this with
\eqref{eq:H0-exc} and the baseline one word per support, and
allowing a harmless overcount of the exceptional baseline, gives
\[
 \begin{aligned}
 |C_4|&\le |\calH|+4(m-1)+2(m-1)+(m^2+4m)\\
 &\le(4n-15)+6(m-1)+m^2+4m\\
 &=m^2+4n+10m-21.
 \end{aligned}
\]
This completes the proof.
\end{proof}

We next treat the case in which all four-supports have a common
two-set $P$.  Each support then has a unique representation
$P\cup e$ for an edge $e$ in an \emph{external graph} $G$ on
$[n]\setminus P$.

\begin{lemma}[Three disjoint external edges]\label{lem:matching-three}
Suppose the external graph $G$ contains a matching of size three
and $n-2\ge q$.  Then
\begin{equation}\label{eq:matching-three}
 |C_4|\le\binom{n-2}{2},\qquad |C_2|\le2n-3.
\end{equation}
\end{lemma}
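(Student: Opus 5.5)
The plan is to treat the two layers separately, using the three disjoint external edges $e_1,e_2,e_3$ of $G$ as rigid witnesses.

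\textbf{Weight-four layer.} First show that every four-support $P\cup e$ has multiplicity one. Any edge $e$ of $G$ is disjoint from at least one of $e_1,e_2,e_3$. Indeed, $e$ has two endpoints, so it meets at most two of three pairwise disjoint edges. If $e$ is disjoint from two of them, say $e_j$ and $e_k$, then $P\cup e$ meets each of $P\cup e_j$ and $P\cup e_k$ exactly in $P$. The second assertion of Lemma~\ref{lem:overlap} needs two \emph{different} exact-overlap pairs, so this alone is not enough. Instead, use the first step of its proof: every word on $P\cup e$ has fixed symbols on $P$. Hence the words on $P\cup e$, restricted to $e$, form a length-two code with mutual distance two, so their number is at most $m$.

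To reach multiplicity one, I would argue as follows. Suppose distinct words $x,x'$ on $P\cup e$ share the symbols on $P$. Then they must differ in both coordinates of $e$. This is consistent, so exact overlaps on $P$ cannot kill multiplicity. The real constraint comes from the weight-four words on the other supports. All words on all supports agree on $P$ whenever their supports meet exactly in $P$. So words on $P\cup e_1$, $P\cup e_2$, $P\cup e_3$ all carry a common pattern $\pi$ on $P$, and so does every support disjoint from one of them, which is every support. Thus the whole of $C_4$ has constant symbols $\pi$ on $P$. Deleting $P$ maps $C_4$ injectively onto a code of weight-two words in length $n-2$. Two words on supports with $|S\cap T|\ge3$ keep the same distance, and the distance is unchanged in general because $P$ contributes nothing. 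So this is a family of weight-two words with distances in $\{2,4\}$, and Lemma~\ref{lem:pure} with $N=n-2\ge q$ gives $|C_4|\le\binom{n-2}2$. This is the cleanest route, and I expect it to be the intended one.

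\textbf{Weight-two layer.} Every two-support must meet each of $P\cup e_1$, $P\cup e_2$, $P\cup e_3$. A two-set avoiding $P$ cannot meet three disjoint edges, so every two-support contains a point $p\in P$. Supports $\{p,u\}$ with $u\notin P$ number at most $2(n-2)$, and $P$ itself is one further support. Next I cut down multiplicities and the count. An active support has multiplicity one by Lemma~\ref{lem:pure}. For $\{p,u\}$, Lemma~\ref{lem:single} against words on some $P\cup e_j$ with $u\notin e_j$ forces the symbol at $p$ to equal $\pi_p$. Now bound the support $P$ itself. If it occurs together with any $\{p,u\}$, it is active and has multiplicity one. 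A word on $P$ must also be at distance $2$ or $4$ from $C_4$ words. It agrees with or differs from $\pi$ on $P$, giving distances $2,3,4$, and $3$ is excluded, so it either equals $\pi$ on $P$ or differs in both coordinates. This leaves $2(n-2)$ supports plus a bounded contribution from $P$, which exceeds $2n-3$ by only one or two.

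\textbf{Expected obstacle.} The delicate point is trimming this excess. If both $\{p,u\}$ and $\{p',u\}$ occur, with $P=\{p,p'\}$, their words must be compatible: their supports are disjoint outside $u$, and the symbols at $u$ force a relation. Using these relations, I would show that either some vertex $u$ supplies only one support, or the support $P$ is absent. A careful counting of $\{p,u\}$ and $\{p',u\}$ along with $P$ should give $|C_2|\le2(n-2)+1=2n-3$. This final accounting is where I expect the main difficulty.
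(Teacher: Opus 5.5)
Your weight-four argument is correct and matches the paper's. The three disjoint edges force a common pattern $\pi$ on $P$ for all of $C_4$. Puncturing $P$ is then an injective isometry onto a $\{2,4\}$-code of weight-two words of length $n-2$, and Lemma~\ref{lem:pure} applies because $n-2\ge q$. Your opening aim of multiplicity one on each four-support is unnecessary and false in general, but you rightly abandon it.

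The gap is in the weight-two layer. You stop short of $|C_2|\le 2n-3$ because of a miscount, and the repair you sketch cannot work. You already have every ingredient:
\begin{itemize}
\item Each two-support contains a point of $P$.
\item For $\{p,u\}$ with $u\notin P$, the symbol at $p$ is forced to equal $\pi_p$. Since two distinct words on one two-support differ in both coordinates, each such support has multiplicity one.
\item $P$ has multiplicity one as soon as some $\{p,u\}$ occurs.
\end{itemize}
Hence $|C_2|\le 2(n-2)+1=2n-3$ exactly, with nothing to trim. The case you leave untreated is when no $\{p,u\}$ occurs. Then $C_2$ lies on $P$ alone and $|C_2|\le m\le 2n-3$, since $n-2\ge q$. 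This is precisely the paper's finish.

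The dichotomy you propose to prove instead (some $u$ supplies only one support, or $P$ is absent) is false. Renormalize the standard code at its word supported on $P=\{a,b\}$, swapping $0$ with the fixed nonzero symbol in coordinates $a$ and $b$. Then $C_4$ consists of one word on each $P\cup e$, so $G$ is complete and has a matching of size three once $n\ge 8$. Meanwhile $C_2$ consists of one word on $P$ and one word on each $\{a,u\}$ and $\{b,u\}$. Every $u$ supplies two supports, $P$ is present, and $|C_2|=2n-3$. So the bound is sharp, and no further trimming is possible.
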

\begin{proof}
Let $e_1,e_2,e_3$ be pairwise disjoint edges of $G$ and choose
codewords on the supports $P\cup e_i$.  For two such words, the
four external coordinates already contribute distance four, so
the words have identical symbols on $P$.  Every other word in
$C_4$ has an external two-support disjoint from at least one of
the $e_i$, and consequently it has the same symbols on $P$.
Puncturing the two coordinates of $P$ is therefore an injective
isometry on $C_4$, whose image consists of weight-two words of
length $n-2$.  Lemma~\ref{lem:pure} gives the first bound.

Every weight-two support must meet all three of $P\cup e_i$;
a two-set avoiding $P$ cannot meet all three disjoint edges.
Hence every occurring two-support contains a coordinate of $P$.
Write $P=\{a,b\}$.  The possible supports are $P$ and
$\{a,v\},\{b,v\}$ for $v\notin P$.  On a support $\{a,v\}$,
choose $e_i$ not containing $v$.  Lemma~\ref{lem:single}
fixes the symbol at $a$ by comparison with a four-word on
$P\cup e_i$, so this support has multiplicity one.  The same
holds for $\{b,v\}$.
If any noncore two-support occurs, comparison with a word on it
fixes one symbol for all words on $P$, and so $P$ has
multiplicity at most one.  This gives at most $2(n-2)+1=2n-3$
words.  If none occurs, $C_2$ is supported only on $P$ and
has size at most $m\le2n-3$ (as $n-2\ge q$).
\end{proof}

\begin{lemma}[A matching of size two]\label{lem:matching-two}
Suppose the external graph $G$ has a matching of size two but no
matching of size three, and $n\ge10$.  Then
\begin{equation}\label{eq:matching-two}
 |C_4|\le m^2+2n+4m-9.
\end{equation}
\end{lemma}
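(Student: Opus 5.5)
The plan is to combine the Erd\H{o}s--Gallai count of external edges with a synchronization argument. Set $N=n-2\ge8$. Since $G$ has no matching of size three, the stated form of Erd\H{o}s--Gallai gives $|E(G)|=|\calH|\le 2N-3=2n-7$. The target \eqref{eq:matching-two} can be rewritten as $|C_4|\le(2n-7)+(m^2+4m-2)$. So it suffices to show that the total excess $\sum_{S\in\calH}(\mu(S)-1)$ is at most $m^2+4m-2$.

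First I fix two disjoint edges $A,B$ of $G$. Any word on $P\cup A$ and any word on $P\cup B$ already differ on the four external coordinates, so they agree on $P$. Hence all words on $P\cup A$ and on $P\cup B$ carry one common pair of symbols $c$ on $P$. Because $\nu(G)=2$, no edge is disjoint from both $A$ and $B$. Every edge that is not one of the at most four cross-edges between $A$ and $B$ is disjoint from $A$ or from $B$, so its words also carry $c$ on $P$. Call these edges \emph{ordinary}. Puncturing $P$ is then an injective isometry on the words over ordinary edges, and its image is a family of weight-two words of length $n-2$ with distances in $\{2,4\}$. By the first part of Lemma~\ref{lem:pure}, active two-supports of this image have multiplicity one and isolated ones have multiplicity at most $m$. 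Isolated supports are pairwise disjoint edges of $G$, so there are at most two of them. The ordinary excess is therefore at most $2(m-1)$.

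It remains to bound the total multiplicity on the at most four cross-edge supports. These are four-subsets of the six-set $P\cup A\cup B$ forming a $2\times2$ grid $K_{2,2}$. Opposite cross-edges are disjoint, so by Lemma~\ref{lem:overlap} both supports of an opposite pair have multiplicity at most $m$; in particular a heavy cross support (multiplicity $>m$) has its opposite of multiplicity at most $m$. Adjacent cross supports share three coordinates, so by Lemma~\ref{lem:heavy-neighbor} a heavy support forces its two adjacent cross supports to multiplicity at most $2m$. Two adjacent cross supports share the three-set $P\cup\{x\}$ with $x$ the common vertex, so Lemma~\ref{lem:threecore} applies to each such pair.

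I would try to prove that the cross total is at most $m^2+2m+2$, so that after subtracting the baseline it fits within the remaining budget $m^2+2m$. The first attempt is to use the symbols $c$ forced on $P$ by words on ordinary edges that are disjoint from a given cross-edge. If such an edge exists for some cross-edge, that cross-edge is pinned on $P$ and behaves like an ordinary one. The genuinely unpinned cross-edges meet every edge of $G$. Pairwise-intersecting edges of $K_{2,2}$ number at most two, so at most two cross supports are unpinned, and they share a common three-core $P\cup\{x\}$. For those two, Lemma~\ref{lem:threecore} with $s=2$ gives at most $m^2+2m$, and the pinned ones contribute through the punctured analysis above.

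The main obstacle is this cross-edge accounting: the naive combination of Lemmas~\ref{lem:local}, \ref{lem:heavy-neighbor} and \ref{lem:overlap} gives roughly $m^2+5m$, which overshoots the budget. If the pinning argument leaves slack, I would fall back on Lemma~\ref{lem:cross} applied to the projections onto $P\cup\{x\}$, as in the proof of Lemma~\ref{lem:threecore}.
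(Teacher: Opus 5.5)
Your reduction matches the paper's proof. Synchronize $P$ via the two fixed disjoint edges. Every non-cross edge is disjoint from one of them and so carries the same symbols on $P$. Puncturing then gives a weight-two code with at most two isolated supports, so the excess is at most $2(m-1)$, and Erd\H{o}s--Gallai gives $|E(G)|\le 2n-7$.

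There is a genuine gap in the cross-edge bound. You leave it as the ``main obstacle'', and the pinning argument you propose for it fails. An unpinned cross-edge (one disjoint from no ordinary edge) need only meet every \emph{ordinary} edge, not every edge of $G$, so unpinned cross-edges need not pairwise intersect. Concretely, let $G$ be the complete graph on $A\cup B$, which has matching number two. Its only ordinary edges are $A$ and $B$, and every cross-edge meets both, so all four cross-edges are unpinned. Moreover, two opposite cross-edges give supports meeting exactly in $P$, so they share no three-core and Lemma~\ref{lem:threecore} does not apply to them. The claim ``at most two unpinned cross supports, with a common three-core'' is therefore false.

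You already state the idea you need, but you misapply it. Lemma~\ref{lem:overlap} bounds \emph{both} supports of an exact-overlap pair by $m$. So a heavy cross support (multiplicity $>m$) cannot have its opposite present at all; it is not merely that its opposite is light. This is what removes your $m^2+5m$ overcount. Suppose a heavy cross support exists:
\begin{itemize}
\item If both of its neighbours in the $4$-cycle are present, those neighbours are opposite to each other. They contribute at most $m$ each, so the total is at most $m^2+2m$.
\item If exactly one neighbour is present, Lemma~\ref{lem:threecore} with $s=2$ gives $m^2+2m$.
\end{itemize}
If no heavy cross support exists, the total is at most $4m\le m^2+2m$. The paper runs this as a direct check over $|X|\in\{0,1,2,3,4\}$, giving a cross total of at most $m^2+2m$. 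Adding $|E_0|+2(m-1)\le 2n-7+2(m-1)$ then finishes the proof. With this replacement, and no pinning argument, your proof closes.
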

\begin{proof}
Fix disjoint edges $e,f$ in $G$, and let $X$ be the set of edges
meeting both $e$ and $f$.  Every edge of $X$ joins one endpoint
of $e$ to one endpoint of $f$, so $X$ is a subset of the
four-edge graph $K_{2,2}$.  Put $E_0=E(G)\setminus X$.

Words on $P\cup e$ and $P\cup f$ agree on $P$ by exact-overlap
synchronization.  Every edge of $E_0$ is disjoint from at least
one of $e,f$, so every codeword supported on $P\cup g$ with
$g\in E_0$ has those same symbols on $P$.
Puncturing $P$ maps this part of $C_4$ isometrically to a
weight-two code whose distinct supports are exactly $E_0$.
That graph has at most two isolated edges; three isolated edges
would be a matching of size three in $G$.  By the multiplicity
part of Lemma~\ref{lem:pure}, this portion contributes at most
\begin{equation}\label{eq:E0}
 |E_0|+2(m-1)
\end{equation}
words.

We claim that the supports corresponding to $X$ contribute
at most
\begin{equation}\label{eq:X}
 m^2+2m
\end{equation}
words.  We verify all possibilities, so no previous-case analysis
is needed.  For $|X|=0$ the assertion is empty, and for
$|X|=1$ it follows from Lemma~\ref{lem:local}.
For $|X|=2$, either the two edges are disjoint, in which case
each has multiplicity at most $m$ by Lemma~\ref{lem:overlap},
or they share a vertex, in which case their four-supports
contain a common three-set and Lemma~\ref{lem:threecore}
gives total multiplicity at most
$\max\{2m,m^2+2m\}=m^2+2m$.
For $|X|=3$, two cross-edges are opposite and therefore
have multiplicity at most $m$ each, while the third has
multiplicity at most $m^2$: the sum is at most $m^2+2m$.
For $|X|=4$, every cross-edge has a disjoint opposite
cross-edge, so the sum is at most $4m\le m^2+2m$ because $m\ge4$.
This proves \eqref{eq:X}.

The Erd\H{o}s--Gallai theorem, applied to the $n-2\ge8$
external vertices, yields $|E(G)|\le2(n-2)-3=2n-7$.
Adding \eqref{eq:E0} and \eqref{eq:X},
\[
 |C_4|\le |E_0|+2(m-1)+m^2+2m
 \le m^2+2n+4m-9.
\]
\end{proof}

\begin{lemma}[No two disjoint external edges]\label{lem:matching-one}
Suppose the external graph $G$ has no matching of size two.
Then either all its edges share a vertex, or $G$ is contained
in a triangle.  In the triangle case,
\begin{equation}\label{eq:triangle}
 |C_4|\le m^2+4m.
\end{equation}
If $G$ is a star with $s$ edges, all four-supports contain
a common three-set, and the estimate \eqref{eq:threecore}
holds with this $s$.
\end{lemma}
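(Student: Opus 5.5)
The argument has two parts: a purely graph-theoretic classification of intersecting edge families, followed by translating each case into the language of four-supports and invoking earlier lemmas.

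\emph{Structure of $G$.} If $G$ has no matching of size two, any two edges share a vertex. Fix an edge $ab$. If every edge contains $a$, or every edge contains $b$, then $G$ is a star. Otherwise there is an edge $e$ avoiding $a$ and an edge $f$ avoiding $b$. Since both meet $ab$, we have $e=bc$ and $f=ad$. These must intersect, so $c=d$, and $G$ contains the triangle $abc$. Now take any further edge $g$. It meets $ab$, $bc$ and $ac$. A two-set meeting all three sides of a triangle must have both endpoints in $\{a,b,c\}$, so $g$ is a side of the triangle. Hence $G$ is contained in the triangle. Star subgraphs of a triangle are also stars, so the dichotomy is exhaustive. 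The degenerate case of at most one edge counts as a star.

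\emph{Triangle case.} Write the triangle on vertices $a,b,c$. The four-supports are $P\cup e$ with $e\subseteq\{a,b,c\}$. There are at most three of them, and any two share $P$ together with one triangle vertex, i.e.\ exactly three coordinates. The first part of Lemma~\ref{lem:three-small} then gives $\sum\mu\le m^2+4m$ directly, which is \eqref{eq:triangle}. Alternatively, all these supports lie in the five-set $P\cup\{a,b,c\}$, but the three-overlap bound is the sharper of the two.

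\emph{Star case.} Let the star have center $c$ and $s$ edges $\{c,v\}$. Every support is then $Q\cup\{v\}$ with $Q=P\cup\{c\}$ a fixed three-set, and the distinct supports correspond to distinct leaves $v$. Lemma~\ref{lem:threecore} therefore applies verbatim with this $Q$ and $\mathcal T=\calH$, which yields \eqref{eq:threecore}.

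The only step requiring any care is the classification itself, specifically checking that every additional edge in the non-star case is a side of the triangle. The rest consists of direct citations of earlier lemmas.
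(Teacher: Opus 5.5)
Your proposal is correct and matches the paper's proof. The graph classification is the same elementary argument: you start from a single edge $ab$ rather than from two incident edges $ab,ac$, which is an inessential variation. The estimates come from the same citations, namely the first part of Lemma~\ref{lem:three-small} in the triangle case and Lemma~\ref{lem:threecore} with $Q=P\cup\{c\}$ and $\mathcal T=\calH$ in the star case.
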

\begin{proof}
For the graph assertion, take two incident edges $ab,ac$.
If an edge avoids $a$, it must be $bc$ to meet both.
Any edge meeting all three of $ab,ac,bc$ is one of these
three.  The cases with zero or one edge are immediate.
When $G$ is contained in a triangle, there are at most three
four-supports, any two of which intersect in three coordinates.
Lemma~\ref{lem:three-small} gives \eqref{eq:triangle}.
If $G$ is a star with center $c$, all four-supports contain
$P\cup\{c\}$; apply Lemma~\ref{lem:threecore}.
\end{proof}

\begin{proposition}[Structural alternatives]\label{prop:alternatives}
Suppose $n\ge10$ and $C_4\ne\varnothing$.  At least one of the
following holds:
\begin{enumerate}[label=\textup{(\roman*)}]
\item\label{item:matching} All four-supports contain a common two-set
and their external graph has a matching of size at least three.
\item\label{item:finite} The weight-four layer satisfies
\begin{equation}\label{eq:finite-bound}
 |C_4|\le m^2+4n+10m-21.
\end{equation}
\item\label{item:three} All four-supports contain a common three-set,
with $s\ge3$ distinct supports, and
\[
 |C_4|\le\max\{ms,m^2+2m+2s-4\}.
\]
\end{enumerate}
\end{proposition}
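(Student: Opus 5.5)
The plan is a case split on whether $\calH$ has a common two-set, using that $\calH$ is $2$-intersecting by Lemma~\ref{lem:2int}. If $\calH$ has no common two-set, then Lemma~\ref{lem:no-core} (valid since $n\ge10$) gives \eqref{eq:no-core}, which is exactly alternative~\ref{item:finite}. So assume all four-supports contain a fixed two-set $P$. Write $\calH=\{P\cup e:e\in E(G)\}$ with external graph $G$ on $[n]\setminus P$, and split further by the matching number $\nu(G)$. If $\nu(G)\ge3$, alternative~\ref{item:matching} holds by definition.

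If $\nu(G)=2$, apply Lemma~\ref{lem:matching-two}. It gives $|C_4|\le m^2+2n+4m-9$, and this is at most $m^2+4n+10m-21$ because $2n+6m-12\ge0$. So alternative~\ref{item:finite} holds.

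If $\nu(G)\le1$ (and $G$ is nonempty because $C_4\ne\varnothing$), Lemma~\ref{lem:matching-one} says $G$ is either contained in a triangle or is a star.
\begin{enumerate}
\item In the triangle case, \eqref{eq:triangle} gives $|C_4|\le m^2+4m$, which is again below the bound of \ref{item:finite}.
\item In the star case with $s$ edges, every support contains the three-set $P\cup\{c\}$, where $c$ is the center, and \eqref{eq:threecore} applies. If $s\ge3$, this is alternative~\ref{item:three}.
\item If the star has $s\le2$ edges, then \eqref{eq:threecore} gives at most $\max\{2m,m^2+2m\}=m^2+2m$. Note that a star with $s\le2$ edges is also contained in a triangle, so this subcase is consistent with the first one. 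Either way the bound lies within \ref{item:finite}.
\end{enumerate}

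I expect no serious obstacle here; the proposition simply collects the earlier lemmas. The points to check are that each hypothesis is met. Lemma~\ref{lem:no-core} and Lemma~\ref{lem:matching-two} each need $n\ge10$, which is assumed. Lemma~\ref{lem:matching-three} is not needed, since alternative~\ref{item:matching} is purely structural. The remaining work is the elementary comparison of each small bound with $m^2+4n+10m-21$, which holds for $n\ge10$ and $m\ge4$. The one subtle point is the degenerate star with $s\le2$: it must be sent to \ref{item:finite}, because \ref{item:three} requires $s\ge3$.
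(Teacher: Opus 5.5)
Your proposal is correct and takes the same route as the paper's proof. You split on whether $\calH$ has a common two-set, and in the common-core case on $\nu(G)\ge3$, $=2$, $\le1$, using Lemmas~\ref{lem:no-core}, \ref{lem:matching-two} and~\ref{lem:matching-one}; like the paper, you send the triangle case and the degenerate star with $s\le2$ (bound $m^2+2m$) to alternative~\ref{item:finite}, and your margin check $2n+6m-12\ge0$ is right.
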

\begin{proof}
By Lemma~\ref{lem:2int}, $\calH$ is $2$-intersecting.
If it has no common two-set, Lemma~\ref{lem:no-core} gives (ii).
Otherwise let $P$ be a common two-set and form the external graph.
A matching of size three gives (i).  If the matching number is
two, Lemma~\ref{lem:matching-two} gives (ii), because
\[
 m^2+2n+4m-9\le m^2+4n+10m-21
 \qquad(n\ge10,\ m\ge4).
\]
If the matching number is at most one, use
Lemma~\ref{lem:matching-one}.  Its triangle case satisfies (ii).
In the star case, if $s\ge3$, Lemma~\ref{lem:threecore}
gives (iii).  If $s\le2$, the same lemma gives
$|C_4|\le m^2+2m$, which satisfies (ii).
\end{proof}

\section{Proofs of the main results}\label{sec:proof}

\begin{proof}[Proof of Theorem~\ref{thm:main}]
The construction in \eqref{eq:standard-lower} gives the lower
bound.  We prove the upper bound.  Let $q\ge5$ be an arbitrary
integer, let $m=q-1$, and take $n\ge N(q)$.
Normalize $\calC$ as in \eqref{eq:layers}.  If $C_4=\varnothing$,
Lemma~\ref{lem:pure} gives $|\calC|\le1+\binom n2$, since
$n\ge2q+1\ge q$.

Suppose $C_4\ne\varnothing$.  By Lemma~\ref{lem:C2-linear},
\begin{equation}\label{eq:mainC2}
 |C_2|\le4n+4m-14.
\end{equation}
Use Proposition~\ref{prop:alternatives}.

In alternative~\ref{item:matching}, Lemma~\ref{lem:matching-three}
applies because $n-2\ge q$.  Hence
\[
 |\calC|\le1+(2n-3)+\binom{n-2}{2}=1+\binom n2.
\]

In alternative~\ref{item:finite}, we have
\[
 \begin{aligned}
 |\calC|&=1+|C_2|+|C_4|\\
 &\le1+(4n+4m-14)+(m^2+4n+10m-21)\\
 &=m^2+8n+14m-34.
 \end{aligned}
\]
This is at most $1+\binom n2$ exactly when
\begin{equation}\label{eq:quadratic}
 n^2-17n-2m^2-28m+70\ge0.
\end{equation}
After substituting $m=q-1$, the larger root is
\[
 \frac{17+\sqrt{8q^2+96q-95}}2.
\]
Thus \eqref{eq:quadratic} holds by the definition of $N(q)$.

Finally, consider alternative~\ref{item:three}.  If
$|C_4|\le ms$, the joint estimate in Lemma~\ref{lem:joint}
gives $|\calC|\le qn$.  For $n\ge2q+1$,
\[
 1+\binom n2-qn
 =\frac{n^2-(2q+1)n+2}{2}\ge0,
\]
so the desired upper bound follows.
If $|C_4|>ms$, the second term of the maximum in
alternative~\ref{item:three} gives
\[
 |C_4|\le m^2+2m+2s-4
 \le m^2+2n+2m-10,
\]
since $s\le n-3$.  Therefore, by \eqref{eq:mainC2},
\[
 |\calC|\le m^2+6n+6m-23
 \le m^2+8n+14m-34,
\]
where the last inequality holds for $n\ge10,m\ge4$.
The right-hand side has already been bounded by $1+\binom n2$
using \eqref{eq:quadratic}.

All three alternatives are covered, proving \eqref{eq:main}.
To verify the final assertion, note that
\[
 \frac{17+\sqrt{8q^2+96q-95}}2\le2q+1
 \quad\Longleftrightarrow\quad q^2-27q+40\ge0,
\]
which holds for every integer $q\ge26$.  Consequently
$N(q)=2q+1$ in this range.
\end{proof}

\begin{proof}[Proof of Theorem~\ref{thm:threshold}]
Fix two distinct symbols $0,1\in\Sigma$ and choose
$Q=\{1,2,3\}\subseteq[n]$.  For $v\in\{4,\ldots,n\}$ and
$a\in\Sigma\setminus\{0\}$ define a word $x_{v,a}$ by
\[
 (x_{v,a})_1=(x_{v,a})_2=a,\qquad
 (x_{v,a})_3=(x_{v,a})_v=1,
\]
with every other coordinate zero.  For $b\ne0$, let $y_b$ be
the weight-two word with values $b,b$ on coordinates $1,2$.
For $u\in\{4,\ldots,n\}$, let $z_u$ be the weight-two word
with values $1,1$ on coordinates $3,u$.  Put
\[
 \mathcal D_{q,n}=\{0\}\cup\{x_{v,a}:v\ge4,\ a\ne0\}
                     \cup\{y_b:b\ne0\}\cup\{z_u:u\ge4\}.
\]
All parts of this union are disjoint.  We check that the
nonzero distances are exactly of the allowed types.
For $x_{v,a},x_{w,b}$, the distance is two if either
$v=w$ with $a\ne b$, or $v\ne w$ with $a=b$;
it is four if $v\ne w$ and $a\ne b$.
Two different $y$-words have distance two, as do two different
$z$-words, and every $y$-word is at distance four from every
$z$-word.  Moreover,
\[
 d(y_b,x_{v,a})=
 \begin{cases}2,&a=b,\\4,&a\ne b,\end{cases}
 \qquad
 d(z_u,x_{v,a})=
 \begin{cases}2,&u=v,\\4,&u\ne v.\end{cases}
\]
Every nonzero codeword has weight two or four, so its distance
from zero is also allowed.  Therefore $\mathcal D_{q,n}$ is a
$\{2,4\}$-distance code of size
\begin{equation}\label{eq:construction}
 |\mathcal D_{q,n}|
 =1+m(n-3)+m+(n-3)=q(n-2).
\end{equation}

At $n=2q-2$ (which is at least $8$ for $q\ge5$),
\[
 q(n-2)-\left(1+\binom n2\right)=q-4>0.
\]
Thus \eqref{eq:main} fails at $n=2q-2$, and by the
definition of the eventual threshold $N_*(q)\ge2q-1$.
Theorem~\ref{thm:main} proves $N_*(q)\le N(q)$, and
$N(q)=2q+1$ for $q\ge26$.
The bracket \eqref{eq:bracket} follows.
\end{proof}

\begin{remark}[Scope of the method]
The proof is a structural argument for arbitrary alphabet sizes
$q\ge5$; it does not use finite-field addition, a linear code
hypothesis, or exhaustive computation.  It establishes the least
eventual length threshold within an additive error of two for
$q\ge26$, without determining which of the three integers
$2q-1,2q,2q+1$ is the exact threshold.
\end{remark}

\section*{Declaration of generative AI and AI-assisted technologies in the manuscript preparation process}
OpenAI ChatGPT was used to assist with organization, language editing,
preliminary proof checking, and preparation of LaTeX source.  The author(s)
are responsible for independently reviewing the mathematical arguments,
checking all bibliographic information, and approving the final submission.


\begin{thebibliography}{99}

\bibitem{AK96}
R.~Ahlswede and L.~H.~Khachatrian,
The complete nontrivial-intersection theorem for systems of finite sets,
J. Combin. Theory Ser. A \textbf{76} (1996), 121--138.

\bibitem{AK97}
R.~Ahlswede and L.~H.~Khachatrian,
The complete intersection theorem for systems of finite sets,
European J. Combin. \textbf{18} (1997), 125--136.

\bibitem{BannaiIto84}
E.~Bannai and T.~Ito,
\emph{Algebraic Combinatorics I: Association Schemes},
Benjamin/Cummings, 1984.

\bibitem{Barg24}
A.~Barg, A.~Glazyrin, W.-J.~Kao, C.-Y.~Lai, P.-C.~Tseng,
and W.-H.~Yu,
On the size of maximal binary codes with 2, 3, and 4 distances,
Combinatorial Theory \textbf{4} (2024), no.~1, Paper 7.
\newblock \url{https://doi.org/10.5070/C64163844}.

\bibitem{BDZZ21}
P.~Boyvalenkov, K.~Delchev, D.~V.~Zinoviev, and V.~A.~Zinoviev,
On two-weight codes,
Discrete Math. \textbf{344} (2021), 112318.
\newblock \url{https://doi.org/10.1016/j.disc.2021.112318}.

\bibitem{BCN89}
A.~E.~Brouwer, A.~M.~Cohen, and A.~Neumaier,
\emph{Distance-Regular Graphs}, Springer, 1989.

\bibitem{CK86}
R.~Calderbank and W.~M.~Kantor,
The geometry of two-weight codes,
Bull. London Math. Soc. \textbf{18} (1986), 97--122.

\bibitem{Delsarte73}
P.~Delsarte,
\emph{An Algebraic Approach to the Association Schemes of Coding Theory},
Philips Res. Rep. Suppl. \textbf{10} (1973), 1--97.

\bibitem{DGS77}
P.~Delsarte, J.-M.~Goethals, and J.~J.~Seidel,
Spherical codes and designs,
Geom. Dedicata \textbf{6} (1977), 363--388.

\bibitem{EFIN87}
H.~Enomoto, P.~Frankl, N.~Ito, and K.~Nomura,
Codes with given distances,
Graphs Combin. \textbf{3} (1987), 25--38.

\bibitem{EG59}
P.~Erd\H{o}s and T.~Gallai,
On maximal paths and circuits of graphs,
Acta Math. Acad. Sci. Hungar. \textbf{10} (1959), 337--356.

\bibitem{EKR61}
P.~Erd\H{o}s, C.~Ko, and R.~Rado,
Intersection theorems for systems of finite sets,
Quart. J. Math. Oxford Ser. (2) \textbf{12} (1961), 313--320.

\bibitem{Frankl78}
P.~Frankl,
On intersecting families of finite sets,
J. Combin. Theory Ser. A \textbf{24} (1978), 146--161.

\bibitem{FW81}
P.~Frankl and R.~M.~Wilson,
Intersection theorems with geometric consequences,
Combinatorica \textbf{1} (1981), 357--368.

\bibitem{FT18}
P.~Frankl and N.~Tokushige,
\emph{Extremal Problems for Finite Sets},
American Mathematical Society, 2018.

\bibitem{GST06}
D.~C.~Gijswijt, A.~Schrijver, and H.~Tanaka,
New upper bounds for nonbinary codes based on the Terwilliger algebra and
semidefinite programming,
J. Combin. Theory Ser. A \textbf{113} (2006), 1719--1731.

\bibitem{Hamming50}
R.~W.~Hamming,
Error detecting and error correcting codes,
Bell System Tech. J. \textbf{29} (1950), 147--160.

\bibitem{HP03}
W.~C.~Huffman and V.~Pless,
\emph{Fundamentals of Error-Correcting Codes},
Cambridge University Press, 2003.

\bibitem{HM67}
A.~J.~W.~Hilton and E.~C.~Milner,
Some intersection theorems for systems of finite sets,
Quart. J. Math. Oxford Ser. (2) \textbf{18} (1967), 369--384.

\bibitem{LovaszPlummer86}
L.~Lov\'asz and M.~D.~Plummer,
\emph{Matching Theory}, North-Holland, 1986.

\bibitem{LRV23}
I.~Landjev, A.~Rousseva, and K.~Vorobev,
Constructions of binary codes with two distances,
Discrete Math. \textbf{346} (2023), 113337.

\bibitem{MS77}
F.~J.~MacWilliams and N.~J.~A.~Sloane,
\emph{The Theory of Error-Correcting Codes},
North-Holland, 1977.

\bibitem{Sch05}
A.~Schrijver,
New code upper bounds from the Terwilliger algebra and semidefinite
programming,
IEEE Trans. Inform. Theory \textbf{51} (2005), 2859--2866.

\bibitem{vanLint99}
J.~H.~van Lint,
\emph{Introduction to Coding Theory}, 3rd ed., Springer, 1999.

\end{thebibliography}
\end{document}